\newtheorem{Theorem}{Theorem}[section]
\newtheorem{Proposition}{Proposition}[section]
\newtheorem{Lemma}{Lemma}[section]
\newtheorem{Definition}{Definition}[section]
\begin{document}

\title{xxxx}
\date{}
 \title{Visibility of Cartesian products of  Cantor sets}
\author{Tingyu Zhang\thanks{Tingyu Zhang is  the corresponding author}, Kan Jiang  and Wenxia Li}
\maketitle{}
\begin{abstract}
Let $K_{\lambda}$ be the attractor of the following IFS
\begin{equation*}
\{f_1(x)=\lambda x, f_2(x)=\lambda x+1-\lambda\}, \;\;0<\lambda<1/2.
\end{equation*}
Given $\alpha \geq  0$, we say the line $y=\alpha x$ is visible through $K_{\lambda}\times K_{\lambda}$ if
$$
\{(x, \alpha x): x\in \mathbb R\setminus \{0\}\}\cap ((K_{\lambda}\times K_{\lambda}))=\emptyset.
$$
Let $V=\left \{\alpha \geq 0: y=\alpha x \mbox{ is visible through } K_{\lambda}\times K_{\lambda} \right \}$.
In this paper, we give a completed description of $V$, e.g.,  its Hausdoff dimension and its topological property. Moreover, we also discuss another type of visible problem which is related to the slicing problems. 
\end{abstract}
\section{Introduction}
Projections, sections,  geodesic curves and  visiblity are the main problems in geometry measure theory. It  is related to many aspects of fractal geometry, for instance,  the arithmetic sum of two self-similar sets is indeed  the projectional problem \cite{Hochman2012,PS}; sections of some fractal sets are connected to the multiple representations of real numbers \cite{XiKan2}; geodesic curves on fractal sets are distinct from the classical differential manifolds \cite{LX}.  For more results on these problems see \cite{wenxi,XWX,Barany,XiC,XiD,XiF,XiG} and references therein. In this paper, we shall consider the visiblity of  the Cartesian products of  some Cantor sets.

Given $\alpha\geq  0$ and some subset $F \subset \mathbb{R}^2$, we say the line $y=\alpha x$ is visible through $F$ if
$$\{(x, \alpha x):  x\in \mathbb R\setminus \{0\}\}\cap F=\emptyset.$$
The concept of  ``visiblity" was investigated by many scholars.  Nikodym \cite{Nikodym} constructed a subset $F$ of $\mathbb{R}^2$ such that   every  point of $F$ is visible from two diametrically opposite directions. In convex geometry, Krasnosel \cite{Helly} offered a beautiful criterion which enables us to check whether the entire boundary of a compact set of $\mathbb{R}^2$ is visible from an interior point.
 Falconer and  Fraser \cite{FF} proved that  for a class of plane self-similar sets  when the attractor $F$ has Hausdorff dimension greater than $1$ then the Hausdorff dimension of the visible subset is $1$. The readers can find more related results in \cite{Neil,JJ1,JJ2,Bond}.

In this paper, we shall analyze   the following   self-similar set.
Let $K_{\lambda}$ be the attractor with the IFS
\begin{equation*}
\{f_1(x)=\lambda x, f_2(x)=\lambda x+1-\lambda\}, \;\;0<\lambda<1/2,
\end{equation*}
i.e.,
\begin{equation}\label{klambda }
K_\lambda =f_1\left (K_\lambda\right )\cup f_2\left (K_\lambda\right ).
\end{equation}
Let
$$V=\{\alpha\geq 0: y=\alpha x \mbox{ is visible through } K_{\lambda}\times K_{\lambda} \}.
$$
It is easy to verify  that the line $y=\alpha x$ is visible
through $K_{\lambda}\times K_{\lambda}$ if  and only if
$$\alpha\notin \frac{K_{\lambda }}{
K_{\lambda }\setminus \{0\}}:=\left\{\dfrac{x}{y}:x,y\in K_{\lambda }, y\neq 0\right\}.$$
Thus,
\begin{equation}\label{setV}
V=[0, +\infty )\setminus \frac{K_{\lambda }}{
K_{\lambda }\setminus \{0\}}.
\end{equation}

By $A^o$ we denote the set of interior points of $A$, by $m(A)$ we denote the Lebsgue measure of $A$. In this paper, we obtain the following results.
\begin{Theorem}\label{Main}
Let $K_{\lambda}$ be given by (\ref{klambda }).
Then
\begin{itemize}
\item [(1)] When $\dfrac{3-\sqrt{5}}{2}\leq \lambda<1/2$,   $V=\emptyset$;

\item [(2)] When $ \dfrac{1}{3}\leq \lambda<\dfrac{3-\sqrt{5}}{2}$,
$$
V=(0, +\infty )\setminus \bigcup_{k=-\infty}^{\infty}\lambda^k\left[1-\lambda, \dfrac{1}{1-\lambda}\right].
$$
\item [(3)] When $ 0< \lambda<\dfrac{1}{3}$,  $V^o\ne \emptyset$. In particular,
when $\dfrac{1}{4}< \lambda<\dfrac{1}{3}$,  $([0,+\infty )\setminus V)^o\ne \emptyset $;
when $0< \lambda\leq \dfrac{1}{4}$, $m([0,+\infty )\setminus V)=0$ and $\dim _H([0,+\infty )\setminus V)=\dfrac{\log 4}{-\log \lambda}$.
\end{itemize}
\end{Theorem}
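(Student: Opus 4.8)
The plan is to work entirely with the quotient set $Q := \frac{K_{\lambda}}{K_{\lambda}\setminus\{0\}}$, since by \eqref{setV} we have $V=[0,+\infty)\setminus Q$, and to exploit two symmetries of $Q$. First, because $f_1(K_\lambda)=\lambda K_\lambda\subseteq K_\lambda$, one checks directly that $\lambda Q=Q$, so $Q$ is invariant under multiplication by every power $\lambda^k$, $k\in\Z$; consequently $Q$ is determined by its trace on a single fundamental domain of the group $\{\lambda^k\}$. Second, since $\alpha=x/y\in Q$ forces $1/\alpha=y/x\in Q$, the set $Q\setminus\{0\}$ is invariant under $\alpha\mapsto 1/\alpha$. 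These two reductions mean that all of $V$ is governed by the behaviour of $Q$ on the \emph{basic block} $B:=[1-\lambda,\tfrac{1}{1-\lambda}]$, which is symmetric under $\alpha\mapsto 1/\alpha$ and whose $\lambda$-scaled copies $\lambda^kB$ either overlap or leave gaps according to $\lambda$.

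The engines of parts (1) and (2) are then an inclusion and an exclusion. For the inclusion I would show $B\subseteq Q$ whenever $\lambda\ge 1/3$. Decomposing $K_\lambda$ into its first-level pieces $I_0=[0,\lambda]$ and $I_1=[1-\lambda,1]$ and renormalising the square $I_1\times I_1$, the line $y=\alpha x$ becomes an affine line $v=\alpha u+c$ with $u,v$ ranging in $K_\lambda$ and $c=\frac{(\alpha-1)(1-\lambda)}{\lambda}\in[0,1]$; thus $\alpha\in Q$ is implied by $c\in K_\lambda-\alpha K_\lambda$, and the existence of this intersection follows from Newhouse's gap lemma once the thickness $\tau(K_\lambda)=\frac{\lambda}{1-2\lambda}$ is at least $1$, i.e. exactly when $\lambda\ge 1/3$. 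For the exclusion I would use a \emph{forced-diagonal} argument: if $\alpha\in\lambda^k(\tfrac{1}{1-\lambda},\tfrac{1-\lambda}{\lambda})$ then by scaling we may take $\alpha\in(\tfrac{1}{1-\lambda},\tfrac{1-\lambda}{\lambda})$, and matching first digits of $x$ and $y$ in $\alpha=x/y$ shows that the pairs $(0,1),(1,0),(1,1)$ all produce ratios outside this open gap, so only the pair $(0,0)$ is possible; iterating forces every digit of $y$ to vanish, whence $y=0$, a contradiction. Combining the two, the copies $\lambda^kB$ are solid and cover $(0,+\infty)$ precisely when consecutive ones overlap, i.e. when $\tfrac{1}{1-\lambda}\ge\tfrac{1-\lambda}{\lambda}\iff\lambda\ge\tfrac{3-\sqrt5}{2}$, giving $V=\emptyset$ (part (1)); for $\tfrac13\le\lambda<\tfrac{3-\sqrt5}{2}$ the blocks are solid but disjoint and the excluded gaps are exactly the stated union, giving part (2).

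For part (3), where $\lambda<1/3$, the basic block is no longer solid and $Q$ becomes a genuine fractal, so I would set up the recursive structure of $Q$ explicitly. Splitting $\alpha=x/y$ according to the first digit pair $(a_1,b_1)\in\{0,1\}^2$ expresses $Q$ through four branches: the pair $(0,0)$ reproduces the scaling symmetry, while $(0,1),(1,0),(1,1)$ contract by a factor comparable to $\lambda$ with bounded distortion; reading these as a graph-directed system with four letters and ratio $\lambda$ per step yields the candidate dimension $\frac{\log 4}{-\log\lambda}=\dim_H(K_\lambda\times K_\lambda)$. The upper bound $\dim_H Q\le\frac{\log 4}{-\log\lambda}$ is immediate because, on any fundamental domain, $(x,y)\mapsto x/y$ is Lipschitz on the relevant part of $K_\lambda\times K_\lambda$, which is bounded away from $y=0$. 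The threshold $1/4$ enters through $4\lambda\lessgtr 1$: for $\tfrac14<\lambda<\tfrac13$ the four branches overlap enough that $Q$ contains subintervals, giving $([0,+\infty)\setminus V)^o\ne\emptyset$, while for $0<\lambda\le\tfrac14$ the branches satisfy an open set condition, the coding is boundedly many-to-one, and one obtains both $m(Q)=0$ and the matching lower bound $\dim_H Q=\frac{\log 4}{-\log\lambda}$; in all cases $V^o\ne\emptyset$ because the between-block gaps produced by the forced-diagonal argument are open and nonempty once $\lambda<\tfrac{3-\sqrt5}{2}$.

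The step I expect to be the main obstacle is the lower bound in part (3): since $\alpha\mapsto x/y$ is a radial projection and hence far from injective, establishing $\dim_H Q\ge\frac{\log 4}{-\log\lambda}$ requires controlling how badly distinct digit-pair sequences collapse to the same slope. Making the four-branch coding boundedly many-to-one for $\lambda\le 1/4$ so that no dimension is lost, and separately settling the critical cases $\lambda=1/4$ (where the dimension equals $1$ yet the Lebesgue measure must still be shown to vanish) and $\lambda=1/3$ (the exact thickness-one boundary between solid and perforated blocks), is where the delicate work lies.
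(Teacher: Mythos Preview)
Your approach to parts (1) and (2) via Newhouse's gap lemma is a genuine alternative to the paper's and would work. The paper instead proves directly, by an elementary interval computation (its Lemma~2.2), that for any two level-$n$ basic intervals $I,J\subseteq[1-\lambda,1]$ the quotient $f(\widetilde I,\widetilde J)$ of their children equals $f(I,J)$; a nesting argument then gives $\dfrac{f_2(K_\lambda)}{f_2(K_\lambda)}=[1-\lambda,\tfrac{1}{1-\lambda}]$. Your thickness argument replaces that computation by a single invocation of the gap lemma (after checking the linking condition, which does hold here since $\alpha\ge 1-\lambda>1-2\lambda$). The exclusion step---your ``forced-diagonal'' argument---is essentially the paper's: both observe that any nonzero $x\in K_\lambda$ factors as $\lambda^m x^*$ with $x^*\in f_2(K_\lambda)$, so $Q\setminus\{0\}\subseteq\bigcup_k\lambda^kB$. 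Parts (1) and (2) then follow identically in both approaches from whether consecutive blocks $\lambda^kB$ overlap.

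Part (3) is where your proposal has real gaps, and where the paper proceeds very differently. Rather than analysing a four-branch coding, the paper outsources each subclaim to an existing theorem: for $\tfrac14<\lambda<\tfrac13$ it invokes Pourbarat's thickness criterion $\lambda^2/(1-2\lambda)^2>\lambda$ to get an interior point of $Q$; for $0<\lambda<\tfrac14$ it obtains $m(Q)=0$ from $\dim_H Q\le 2\dim_H K_\lambda<1$; for the critical value $\lambda=\tfrac14$ it applies the Simon--Solomyak theorem on radial projections of one-dimensional OSC self-similar sets; and for the exact value $\dim_H Q=\frac{\log4}{-\log\lambda}$ it applies B\'ar\'any's theorem on $C^2$ nonlinear images of planar self-similar sets to $g(x,y)=x/y$ restricted to $f_2(K_\lambda)\times f_2(K_\lambda)$. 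Your direct approach would have to reprove substantial parts of these results: ``the four branches overlap enough that $Q$ contains subintervals'' is precisely Pourbarat's contribution and is not obvious; the ``open set condition with boundedly many-to-one coding'' is the nonlinear lower bound that B\'ar\'any's transversality machinery handles; and at $\lambda=\tfrac14$ your sketch offers no mechanism to separate dimension $1$ from positive Lebesgue measure. These are exactly the places you flag as ``the delicate work'', and the paper resolves them by citation rather than by a self-contained argument.
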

There are mainly two types of visible problem \cite{FF}. Now, we shall consider another one.  First, we introduce some definitions. 
Let $l_\theta$ denote the  line going through the  origin in direction $\theta\in(0,\pi/2), $ that is, 
$$l_{\theta}=\{(x,(\tan\theta) x): x\in \mathbb{R}\}.$$
Given $\theta\in (0,\pi/2)$. 
The visible part of $K_{\lambda}\times K_{\lambda}$ is defined as follows:
$$V_{\theta}(K_{\lambda}\times K_{\lambda})=\{(x,y)\in K_{\lambda}\times K_{\lambda}: ((x,y)+l_{\theta})\cap (K_{\lambda}\times K_{\lambda})=\{(x,y)\}\}.$$
Let  $(x,y)\in K_{\lambda}\times K_{\lambda}$. Define$$Proj_{\theta}(x,y)=y-x\tan\theta.$$
In other words, we project a point  $(x,y)$ to the $y$-axis in direction $\theta.$
Moreover, we also define the following sets. 
$$Proj_{\theta}(K_{\lambda}\times K_{\lambda})=\{y-x\tan\theta :(x,y)\in K_{\lambda}\times K_{\lambda}\},$$
$$Proj_{\theta}(V_{\theta}(K_{\lambda}\times K_{\lambda}))=\{y-x\tan\theta :(x,y)\in V_{\theta}(K_{\lambda}\times K_{\lambda})\}.$$
 Generally, $Proj_{\theta}(K_{\lambda}\times K_{\lambda}) $ is not an interval.   In what follows, we always assume that  $E=Proj_{\theta}(K_{\lambda}\times K_{\lambda})=[-\tan\theta, 1]$, which is a natural assumption \cite{Barany}. Clearly, $E$ is the attractor of the following IFS,
\begin{eqnarray*}
               g_1(x)&=&\lambda x- (1-\lambda) \tan\theta\\
                g_2(x)&=&\lambda x+(1-\lambda)(1-  \tan\theta)\\
                g_3(x)&=&\lambda x\\
                g_4(x)&=&\lambda x+ 1-\lambda.
\end{eqnarray*}
 For the IFS of $E$, i.e. $\{g_i\}_{i=1}^{4}$, 
define $T_{j}(x):=g_{j}^{-1}(x)$ for $x\in g_j(E)$ and $1\leq j\leq 4$.  
We denote the concatenation $T_{i_{n}}\circ \ldots \circ T_{i_{1}}(x)$ by $T_{i_1\ldots i_n}(x)$. 
Let  $$H_i=g_i(E)\cap g_{i+1}(E), 1\leq i\leq 3,$$ i.e. 
we define  $H=H_1\cup H_2\cup H_3=[a_1,b_1]\cup [a_2,b_2]\cup[a_3,b_3]$. 
The following two propositions are motivated by the results in   open dynamical systems. 
\begin{Proposition}\label{pro1}
Suppose that $Proj_{\theta}(K_{\lambda}\times K_{\lambda})=[-\tan\theta, 1]$.  
For any $[a_i, b_i], 1\leq i\leq 3$, if there are some 
$i_1\cdots i_v, j_1j_2\cdots j_w$ such that 
$$T_{i_1\cdots i_v}(a_i)\in H, T_{j_1j_2\cdots j_w}(b_i)\in H,$$
then 
$$Proj_{\theta}(V_{\theta}(K_{\lambda}\times K_{\lambda}))=\{a\in[-\tan\theta,1]: \sharp\{(x,(\tan\theta) x+a)\cap (K_{\lambda}\times K_{\lambda} ) \}=1\}$$ 
is a graph-directed self-similar sets with the strong separation condition, where 
$\sharp(\cdot)$ denotes the cardinality.  
\end{Proposition}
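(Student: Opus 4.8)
The plan is to translate the geometric statement into one about unique codings and then recognize the resulting set as the survivor set of an open dynamical system. First I would record the bijection between the fibre of $Proj_\theta$ over a value $a$ and the set of codings of $a$ under the IFS $\{g_i\}_{i=1}^4$. Since $0<\lambda<1/2$, the Cantor set $K_\lambda$ satisfies the strong separation condition, so every point of $K_\lambda$ has a unique address in $\{1,2\}^{\N}$; hence every $(x,y)\in K_\lambda\times K_\lambda$ has a unique pair-address $(\sigma,\tau)\in\{1,2\}^{\N}\times\{1,2\}^{\N}$. The computation preceding the proposition shows that the four products $f_i(K_\lambda)\times f_j(K_\lambda)$ project exactly onto $g_1(E),\dots,g_4(E)$, which sets up a bijection $\{1,2\}^2\leftrightarrow\{1,2,3,4\}$ identifying the pair-address of $(x,y)$ with a $\{g_i\}$-coding of $a=Proj_\theta(x,y)$. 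A point $(x,y)$ lies in $V_\theta(K_\lambda\times K_\lambda)$ if and only if the line $\{(x,x\tan\theta+a)\}$ with $a=Proj_\theta(x,y)$ meets $K_\lambda\times K_\lambda$ only at $(x,y)$, i.e. if and only if the fibre over $a$ is a singleton; this is the displayed identity. Since the fibre over $a$ is in bijection with the set of $\{g_i\}$-codings of $a$, we obtain $Proj_\theta(V_\theta(K_\lambda\times K_\lambda))=\{a\in E:a\text{ has a unique }\{g_i\}\text{-coding}\}=:U$.

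Next I would describe $U$ dynamically. Writing $T(a)=T_j(a)=g_j^{-1}(a)$ on the part of $E$ lying in a single piece $g_j(E)$, a value $a$ fails to have a unique coding precisely when its forced orbit meets the overlap region $H=H_1\cup H_2\cup H_3$: at a point of $H_i=g_i(E)\cap g_{i+1}(E)$ both branches $T_i$ and $T_{i+1}$ extend to legitimate codings, while off $H$ the first digit is forced. Thus $U=\{a\in E:T^n(a)\notin H\text{ for all }n\ge 0\}$ is exactly the survivor set of the piecewise-linear expanding map $T$ with hole $H$, which is the open dynamical system alluded to before the statement.

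The core of the argument is to promote the hypothesis into a finite Markov structure. The complement $E\setminus H^\circ$ splits into four core intervals $C_j\subseteq g_j(E)$ on which $T=T_j$ acts as a single affine branch of ratio $1/\lambda$. I would enlarge the breakpoint set $\{a_i,b_i\}_{i=1}^3$ by adjoining the forward $T$-orbits of the $a_i$ and $b_i$, taken along their forced itineraries. The assumption $T_{i_1\cdots i_v}(a_i)\in H$ and $T_{j_1\cdots j_w}(b_i)\in H$ guarantees that every such orbit terminates after finitely many steps (upon entering $H$ the point leaves the survivor region and no longer refines any core), so the enlarged breakpoint set is finite and induces a finite partition $\mathcal P=\{I_1,\dots,I_N\}$ of $E\setminus H^\circ$ that is Markov: each $T(I_k)=T_{j(k)}(I_k)$ is, up to the part falling in $H$, a union of full members of $\mathcal P$. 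For $I_k\subseteq C_{j(k)}$ this gives $U\cap I_k=\bigsqcup_{I_l\subseteq T_{j(k)}(I_k)}g_{j(k)}(U\cap I_l)$, which is precisely a graph-directed self-similar system: the vertices are the $I_k$, there is an edge $I_k\to I_l$ whenever $I_l\subseteq T_{j(k)}(I_k)$, and the associated similarity is $g_{j(k)}$ of ratio $\lambda$. Its attractor is the tuple $(U\cap I_k)_k$, so $U=\bigcup_k(U\cap I_k)$ is graph-directed self-similar.

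The step I expect to be the main obstacle is the verification of the strong separation condition, because the pieces $g_{j(k)}(I_l)$ indexed by the out-edges of $I_k$ can a priori share endpoints whenever two members of $\mathcal P$ are adjacent. Here the hypothesis is used a second time: an interior breakpoint $p$ separating two adjacent members of $\mathcal P$ is a forward image of some $a_i$ or $b_i$, hence its forced orbit enters $H$; once one checks that the orbit reaches the interior of $H$, a whole one-sided neighbourhood of $p$ is mapped into $H^\circ$ after finitely many steps and is therefore deleted from the survivor set, so $U$ keeps a positive distance from $p$. This produces genuine gaps between the graph-directed pieces and upgrades the open set condition to strong separation. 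The remaining work is the careful open/closed bookkeeping at the endpoints of the $H_i$ (deciding, for each breakpoint, which one-sided itinerary is forced and confirming it lands in $H^\circ$ rather than merely on $\partial H$), which is routine once the finite Markov partition is in hand.
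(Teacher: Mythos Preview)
Your argument is correct in outline and follows the same strategy as the paper: first identify $Proj_\theta(V_\theta(K_\lambda\times K_\lambda))$ with the univoque set $U_1$ of $E$ under $\{g_i\}_{i=1}^4$, then analyze $U_1$ via the associated expanding map with hole $H$. The paper, however, does not carry out the second step itself. After recording the identification with $U_1$ (its Lemma in Section~3, which is your first paragraph), it simply states that the proposition is a corollary of the main result of Baker--Dajani--Jiang~\cite{SKK}, which gives precisely the criterion in the hypothesis for the univoque set of a homogeneous interval IFS to be graph-directed self-similar with strong separation. What you have written in your last three paragraphs is, in effect, a sketch of how that cited theorem is proved: the survivor-set description of $U_1$, the finite Markov partition generated by the forward orbits of the hole endpoints $a_i,b_i$, and the gap argument for strong separation. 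So the two approaches coincide mathematically; you have unpacked what the paper leaves inside a black-box citation. One small caution you already anticipate: the hypothesis only places $T_{i_1\cdots i_v}(a_i)$ in $H$, not in $H^\circ$, and your strong-separation paragraph needs the orbit to reach the interior (or else a careful one-sided treatment at each breakpoint). This is exactly where~\cite{SKK} does its bookkeeping, so if you intend your write-up to be self-contained you should expect that endpoint analysis to require genuine care rather than being entirely routine.
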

Analogously, we have the following result. 
\begin{Proposition}\label{pro2}
$Proj_{\theta}(K_{\lambda}\times K_{\lambda})=[-\tan\theta, 1]$.   For any $[a_i, b_i], 1\leq i\leq 3$, if all the possible orbits of $a_i$ and $b_i$ hit finitely many points, then 
apart from a countable set 
$$Proj_{\theta}(V_{\theta}(K_{\lambda}\times K_{\lambda}))=\{a\in[-\tan\theta,1]: \sharp\{(x,(\tan\theta) x+a)\cap (K_{\lambda}\times K_{\lambda} ) \}=1\}$$ 
is a graph-directed self-similar sets with the open set  condition. 
\end{Proposition}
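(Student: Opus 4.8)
The plan is to recast the projected visible set as the survivor set of an expanding dynamical system with a hole, and then use the finiteness hypothesis to realise that survivor set as a subshift of finite type, i.e. a graph-directed self-similar set.

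First I would translate the multiplicity count into symbolic dynamics. Since $0<\lambda<1/2$, the planar IFS generating $K_\lambda\times K_\lambda$ (the four maps $(f_i,f_j)$) satisfies the strong separation condition, so every point of $K_\lambda\times K_\lambda$ has a unique address in $\{1,2,3,4\}^{\mathbb N}$, which under $Proj_{\theta}$ becomes a $\{g_i\}$-address of its image in $E=[-\tan\theta,1]$. Hence, for $a\in E$, the number of intersection points $\sharp\{x:(x,(\tan\theta)x+a)\in K_\lambda\times K_\lambda\}$ equals the number of $\{g_i\}$-codings of $a$, i.e. its multiplicity in the overlapping attractor $E$. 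Writing $T_j=g_j^{-1}$ and following the zooming dynamics, $a$ acquires two distinct codings exactly when some orbit of $a$ under the $T_j$ meets an overlap $H_i=g_i(E)\cap g_{i+1}(E)$: being in $H_i$ offers two admissible symbols and so forces a genuine branching of the address. Therefore the visible set is, up to the boundary $\{a_i,b_i\}$ of the holes, the survivor set
$$Proj_{\theta}(V_{\theta}(K_{\lambda}\times K_{\lambda}))=\{a\in E:\ \text{no orbit of }a\text{ under the }T_j\text{ enters }H\},$$
which is precisely the survivor set of the piecewise-linear expanding map (slope $1/\lambda$) with hole $H=H_1\cup H_2\cup H_3$. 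This is the open-dynamics viewpoint announced before the statement.

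Next I would exploit the hypothesis that every orbit of the endpoints $a_i,b_i$ hits only finitely many points. Collecting this finite orbit set together with the endpoints of $E$ and of the cylinders $g_i(E)$ produces a finite partition of $E$ into basic intervals $E_1,\dots,E_N$ with disjoint interiors, with respect to which the branched dynamics is Markov: each $T_j$ sends a basic interval onto a union of basic intervals, because the preimages of all partition points remain among the finitely many endpoint-orbit points. Deleting the holes $H_i$ from this Markov structure leaves a finite directed graph whose vertices are the surviving basic intervals and whose edges record the admissible branches $g_i$ that keep the orbit outside $H$. The survivor set is then the graph-directed attractor of this graph, each edge carrying the similarity $g_i$ of ratio $\lambda$. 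The open set condition follows by taking the interiors of the $E_k$ as feasible open sets: the $g_i$ are linear contractions of ratio $\lambda$ and, once the overlap region $H$ is removed, the images of distinct admissible branches have disjoint interiors. In contrast to Proposition \ref{pro1}, one only gets the open set condition and not strong separation, since two surviving cylinders may still abut along a shared endpoint $a_i$ or $b_i$; the union of these contact points with all their backward images is exactly the countable set that must be discarded, and it accounts for the ``apart from a countable set'' in the statement (these are the intercepts $a$ for which the line meets $K_\lambda\times K_\lambda$ at a boundary point whose multiplicity count is ambiguous).

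The main obstacle I anticipate is the bookkeeping in the middle step: proving rigorously that the finite endpoint orbits generate a genuine finite Markov partition for the \emph{branched} map, and that the resulting transition graph encodes the ``avoid $H$'' condition exactly. In particular one must check that branching occurs only on $H$, treat the boundary endpoints $a_i,b_i$ consistently (so that the surviving cylinders touch only there), and confirm that the leftover ambiguity is confined to the countable backward orbit of $\{a_i,b_i\}$, so that off this countable set the visible set is precisely the graph-directed self-similar set with the open set condition.
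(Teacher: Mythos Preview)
Your proposal is correct and follows essentially the same route as the paper: the paper first identifies $Proj_{\theta}(V_{\theta}(K_{\lambda}\times K_{\lambda}))$ with the univoque set $U_1$ of $E$ under $\{g_i\}$ (its Lemma 3.2), and then simply invokes the main result of Jiang--Dajani \cite{KarmaKan}, whose proof is precisely the open-dynamics / finite-Markov-partition argument you have sketched. In other words, you have unpacked the cited black box rather than merely citing it; the content is the same.
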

Propositions \ref{pro1} and \ref{pro2} give a sufficient conditition which allows us to calculate the  dimension of the slicing set  $\{a\in[-\tan\theta,1]: \sharp\{(x,(\tan\theta) x+a)\cap (K_{\lambda}\times K_{\lambda} ) \}=1\}$.

The paper is arranged as follows. In section 2, we give the proof of Theorem \ref{Main}. In section 3, we give the proofs of Propositions \ref{pro1} and \ref{pro2}. Finally,  we give some remarks.
\section{Proofs of Main results}
Before we prove Theorem \ref{Main}, we give some definitions, and prove a useful lemma.
 Let $E=[0,1]$. For any $
(i_{1},\cdots ,i_{n})\in \{1,2\}^{n}$, we call $f_{i_{1},\cdots
,i_{n}}([0,1])=(f_{i_{1}}\circ \cdots \circ f_{i_{n}})([0,1])$ a basic interval of
rank $n,$ which has length $\lambda^{n}$. Denote by $E_{n}$ the collection of all these basic intervals of rank $
n$. Suppose $A$ and $B$ are the left and right endpoints of some basic
intervals in $E_{k}$ for some $k\geq 1$, respectively. Denote by $
G_{n}(\subset E_{n})$ the union  of all the basic intervals of rank $n$
which are contained in $[A,B].$ Let $I$ be a basic interval with rank $n$. Define $\widetilde{I}=f_1(I)\cup f_2(I)$. 
\begin{Lemma}
\label{key1} Let $F:U\rightarrow \mathbb{R}$ be a continuous function, where $U\subset \mathbb{R}^2$ is a non-empty open set.
Suppose $A$ and $B$ are the left and right endpoints of some
basic intervals in $G_{k_{0}}$ for some $k_{0}\geq 1$ respectively such that
$[A,B]\times \lbrack A,B]\subset U.$ Then $K\cap \lbrack A,B]=\cap _{n={k_{0}%
}}^{\infty }G_{n}$. Moreover, if for any $n\geq k_{0}$ and any two basic
intervals $I,J\subset G_{n}$,  such that
\begin{equation*}
F(I,J)=F(\widetilde{I},\widetilde{J}),
\end{equation*}%
then $F(K\cap \lbrack A,B],K\cap \lbrack A,B])=F(G_{k_{0}},G_{k_{0}}).$
\end{Lemma}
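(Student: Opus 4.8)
The plan is to separate the two assertions: the first is a structural fact about the grid of basic intervals, and the second is the substantive claim that the self-similar invariance of $F$ propagates the $F$-image from level $k_0$ all the way down to the Cantor set. For the identity $K\cap[A,B]=\bigcap_{n\ge k_0}G_n$, I would first note that, since $A,B$ are endpoints of rank-$k_0$ basic intervals and $0<\lambda<1/2$, the rank-$n$ basic intervals ($n\ge k_0$) are pairwise disjoint and $A,B$ sit on the rank-$n$ grid for every $n\ge k_0$ (each is the left, resp.\ right, endpoint of the extreme rank-$n$ descendant of the rank-$k_0$ interval it bounds). Consequently every rank-$n$ basic interval is either contained in $[A,B]$ or has interior disjoint from $(A,B)$, so $G_n$ collects exactly the rank-$n$ blocks lying in $[A,B]$, and the parent of any rank-$(n+1)$ block in $[A,B]$ is a rank-$n$ block in $[A,B]$, giving $G_{n+1}\subset G_n$. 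Intersecting $\big(\bigcap_n\bigcup_{|w|=n}f_w([0,1])\big)$ with $[A,B]$ then yields $K\cap[A,B]=\bigcap_{n\ge k_0}G_n$.

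For the main identity, one inclusion is immediate: $K\cap[A,B]=\bigcap_n G_n\subset G_{k_0}$ forces $F(K\cap[A,B],K\cap[A,B])\subset F(G_{k_0},G_{k_0})$. For the reverse I would show the whole sequence of images is constant, $F(G_n,G_n)=F(G_{k_0},G_{k_0})$ for all $n\ge k_0$, by induction on $n$ with single step $F(G_n,G_n)=F(G_{n+1},G_{n+1})$. Here I use two elementary facts. The image of a product distributes over unions, $F(P\cup P',Q)=F(P,Q)\cup F(P',Q)$, so that $F(G_n,G_n)=\bigcup_{I,J\in\mathcal G_n}F(I,J)$, where $\mathcal G_n$ is the family of rank-$n$ blocks in $[A,B]$; and the grid identity $\bigcup_{I\in\mathcal G_n}\widetilde I=G_{n+1}$, i.e.\ the offspring $\widetilde I=f_1(I)\cup f_2(I)$ of the level-$n$ blocks are exactly the level-$(n+1)$ blocks inside $[A,B]$ (this is precisely the content of the first part). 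Applying the hypothesis $F(I,J)=F(\widetilde I,\widetilde J)$ block by block and recombining,
\[
F(G_n,G_n)=\bigcup_{I,J\in\mathcal G_n}F(I,J)=\bigcup_{I,J\in\mathcal G_n}F(\widetilde I,\widetilde J)=F\Big(\bigcup_{I}\widetilde I,\ \bigcup_{J}\widetilde J\Big)=F(G_{n+1},G_{n+1}),
\]
which closes the induction.

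Finally I would upgrade the constancy to the Cantor set by a compactness argument. Each $G_n$ is a finite union of closed intervals, so $G_n\times G_n$ is compact and decreasing with $\bigcap_n(G_n\times G_n)=(K\cap[A,B])\times(K\cap[A,B])$; for a continuous $F$ and a decreasing sequence of compacta one has $F(\bigcap_n C_n)=\bigcap_n F(C_n)$ (the nontrivial inclusion is a routine subsequence extraction). Hence $F(K\cap[A,B],K\cap[A,B])=\bigcap_n F(G_n,G_n)=F(G_{k_0},G_{k_0})$ by the constancy just proved. I expect the main obstacle to be the bookkeeping that legitimizes the induction step: pinning down that $\widetilde I$ subdivides into exactly the two level-$(n+1)$ blocks making up $G_{n+1}$, so that applying the hypothesis termwise reassembles $F(G_{n+1},G_{n+1})$ and not some strictly larger image — this rests on the grid alignment forced by $A,B$ being basic-interval endpoints together with $\lambda<1/2$. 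The remaining points, namely the interchange of $F$ with finite unions and with the nested intersection, are formal once $F$ is continuous and the sets are compact.
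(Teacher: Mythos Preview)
Your proposal is correct and follows essentially the same route as the paper: establish $G_{n+1}\subset G_n$ and hence $K\cap[A,B]=\bigcap_n G_n$, then prove $F(G_n,G_n)=F(G_{n+1},G_{n+1})$ by distributing $F$ over the union of basic intervals and applying the hypothesis $F(I,J)=F(\widetilde I,\widetilde J)$ blockwise, and finally pass to the intersection. If anything, you are more careful than the paper, which simply asserts that continuity gives $F(K\cap[A,B],K\cap[A,B])=\bigcap_n F(G_n,G_n)$ without spelling out the compactness/subsequence argument you supply.
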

\begin{proof}
By the construction of $G_{n}$,  i.e. $G_{n+1}\subset
G_{n}$  for any $n\geq k_{0}$,
it follows that
\begin{equation*}
K\cap \lbrack A,B]=\cap _{n=k_{0}}^{\infty }G_{n}.
\end{equation*}
The continuity of $F$ yields that
\begin{equation*}
F(K\cap \lbrack A,B],K\cap \lbrack A,B])=\cap _{n=k_{0}}^{\infty
}F(G_{n},G_{n}).
\end{equation*}
Without loss of generality, we may assume that 
$$G_n=\cup_{1\leq i\leq t_n}I_{n,i} \mbox{ for some } t_n\geq 1,$$ where 
$I_{n,i}$ is a basic interval in $G_n$. 
By the condition in lemma, i.e. 
for any $n\geq k_{0}$ and any two basic
intervals $I,J\subset G_{n}$,  such that
\begin{equation*}
F(I,J)=F(\widetilde{I},\widetilde{J}),
\end{equation*}
 it follows that
\begin{eqnarray*}
F(G_{n},G_{n}) &=&\cup _{1\leq i\leq t_{n}}\cup _{1\leq j\leq
t_{n}}F(I_{n,i},I_{n,j}) \\
&=&\cup _{1\leq i\leq t_{n}}\cup _{1\leq j\leq t_{n}}F(\widetilde{
I_{n,i}},\widetilde{I_{n,j}}) \\
&=&F(\cup _{1\leq i\leq t_{n}}\widetilde{I_{n,i}},\cup _{1\leq j\leq
t_{n}}\widetilde{I_{n,j}}) \\
&=&F(G_{n+1},G_{n+1}).
\end{eqnarray*}
Therefore, $F(K\cap \lbrack A,B],K\cap \lbrack
A,B])=F(G_{k_{0}},G_{k_{0}}).$
\end{proof}
\begin{Lemma}\label{key2}
Let $f(x,y)=\dfrac{x}{y}$, and
$I=[a,a+t],J=[b,b+t]$ be two basic intervals.
If $1/3\leq \lambda<1/2$, and $b\geq a\geq 1-\lambda,$ then
$f(\tilde{I},\tilde{J})=f(I,J).$
\end{Lemma}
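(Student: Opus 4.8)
The plan is to exploit the fact that $\tilde I$ and $\tilde J$ consist of the two rank-$(n+1)$ subintervals of $I$ and $J$, so that $\tilde I\times\tilde J\subseteq I\times J$ and hence one inclusion, $f(\tilde I,\tilde J)\subseteq f(I,J)$, is automatic. The content of the lemma is therefore the reverse inclusion: the four sub-boxes of $I\times J$ must have images under $f(x,y)=x/y$ that together cover all of $f(I,J)$. Since $b\ge a\ge 1-\lambda>0$, every point of $I\times J$ and of $\tilde I\times\tilde J$ has positive coordinates, so $f$ is continuous there and each image is a closed interval; in particular $f(I,J)=\left[\frac{a}{b+t},\frac{a+t}{b}\right]=:[m,M]$.

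First I would write $\tilde I=I_L\cup I_R$ with $I_L=[a,a+\lambda t]$ and $I_R=[a+(1-\lambda)t,a+t]$, likewise $\tilde J=J_L\cup J_R$, and record the four images $P_{\sigma\tau}=f(I_\sigma,J_\tau)$ for $\sigma,\tau\in\{L,R\}$. A direct check shows that $P_{LR}$ contains the left endpoint $m=\frac{a}{b+t}$ and $P_{RL}$ contains the right endpoint $M=\frac{a+t}{b}$. Since all four intervals lie inside $[m,M]$, it suffices to prove that $P_{LL}\cup P_{LR}\cup P_{RL}\cup P_{RR}$ is connected: a connected subset of $[m,M]$ containing both endpoints must equal $[m,M]$, which is exactly $f(I,J)\subseteq f(\tilde I,\tilde J)$.

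Connectedness I would establish by verifying that consecutive intervals (in increasing order of left endpoint) overlap, each overlap reducing after cross-multiplication to an elementary inequality in $a,b,t,\lambda$. The decisive one is the overlap of $P_{LR}$ with $P_{LL}$, which after clearing denominators becomes $a(2\lambda-1)+\lambda b+\lambda^2 t\ge 0$; using $b\ge a$ this is bounded below by $a(3\lambda-1)+\lambda^2 t$, nonnegative precisely when $\lambda\ge 1/3$. This is where the hypothesis $\lambda\ge 1/3$ enters, and it explains the threshold. The remaining overlaps are handled similarly, now using $a\ge 1-\lambda$ and $b\le 1-t$ (the latter since $J\subseteq[0,1]$); each reduces to a linear inequality valid throughout $1/3\le\lambda<1/2$.

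The main obstacle I anticipate is bookkeeping rather than depth: the left-to-right ordering of the four intervals $P_{\sigma\tau}$ is not fixed but depends on $a,b,t,\lambda$, and in some parameter ranges $P_{RR}$ is needed to bridge the gap between $P_{LL}$ and $P_{RL}$ while in others $P_{LL}$ and $P_{RL}$ already overlap. To avoid an unwieldy case split I would argue connectedness uniformly: for each target value $c\in[m,M]$ the line $x=cy$ meets $I\times J$ in a nondegenerate segment, and I would show this segment must pass through one of the four sub-boxes, the only way to miss all of them being to remain in the ``cross'' formed by the central gaps of $I$ and of $J$, which the inequalities above rule out once $\lambda\ge 1/3$. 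This reformulation packages all the overlap inequalities into a single covering statement and sidesteps the order-dependent case analysis.
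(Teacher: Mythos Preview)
Your plan coincides with the paper's proof: write $\tilde I\times\tilde J$ as four sub-rectangles, compute the four image intervals under $f(x,y)=x/y$, and check that they chain together to cover $[a/(b+t),(a+t)/b]$. The overlap you highlight between $P_{LR}$ and $P_{LL}$ is exactly the paper's inequality $s_1\ge r_2$, reduced via $b\ge a$ to $a(3\lambda-1)+\lambda^2 t\ge 0$; this is indeed where the hypothesis $\lambda\ge 1/3$ is used.

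The one place your sketch is thinner than the paper is the last link, $P_{RR}$ to $P_{RL}$. The relevant numerator $-a-t+2a\lambda+b\lambda+3t\lambda-t\lambda^{2}$ is \emph{not} forced nonnegative by $b\ge a$, $a\ge 1-\lambda$ and $b\le 1-t$ alone (at $a=b$ and $\lambda=1/3$ it equals $-t/9$). The paper handles this by using that two \emph{distinct} basic intervals of the same rank satisfy $b>a+t$, which makes the numerator at least $a(3\lambda-1)+t(4\lambda-1-\lambda^{2})\ge 0$; in the remaining case $a=b$ it checks directly that $P_{LL}$ already overlaps $P_{RL}$, so $P_{RR}$ is not needed as a bridge. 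You anticipated precisely this order-dependence in your final paragraph; the actual fix is this short case split rather than the geometric repackaging you propose, though that would also work if carried out.
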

\begin{proof}
Note that
$$\tilde{I}=[a,a+\lambda t]\cup \lbrack a+t-\lambda t,a+t],\tilde{J}=[b,b+\lambda t]\cup \lbrack b+t-\lambda t,b+t].$$
Therefore,
$$f(\tilde{I},\tilde{J})=J_{1}\cup J_{2}\cup J_{3}\cup J_{4},$$
where
\begin{eqnarray*}
\begin{split}
&J_1=\left[\frac{a}{b+t},\frac{a+\lambda t}{b+t-\lambda t}%
\right]=:[r_{1},s_{1}]  \\
&J_2=\left[\frac{a}{b+\lambda t},\frac{a+\lambda t}{b}%
\right]=:[r_{2},s_{2}]\\
&J_3=\left[\frac{a+t-\lambda t}{b+t},\frac{a+t}{b+t-\lambda t}%
\right]=:[r_{3},s_{3}]\\
&J_4= \left[\frac{a+t-\lambda t}{b+\lambda t},\frac{a+t}{b}%
\right]=:[r_{4},s_{4}].
\end{split}
\end{eqnarray*}
Note that $f(I,J)=[r_1, s_4]$. In the following, we verify that $f(I,J)=J_{1}\cup J_{2}\cup J_{3}\cup J_{4}$

Since $b\geq a\geq 1-\lambda$ and $\lambda \geq \frac{1}{3}$, we have
$$r_{3}-r_{2}=\frac{a+t-\lambda t}{b+t}-\frac{a}{b+\lambda t}=
\frac{t(1-\lambda)(b-a+t\lambda)}{\left( b+t\lambda \right) \left( b+t\right) }\geq 0.$$

Now it suffices to check that
$$
                s_1-r_2\geq0, \;\;
                s_2-r_3\geq0\;\;\textrm{and}\;\;
                s_3-r_4\geq0.
$$
We have
$$s_1-r_2=\frac{a+\lambda t}{b+t-\lambda t}-\frac{a}{b+\lambda t}=
\frac{t( 2a\lambda
-a+b\lambda +t\lambda ^{2})}{(b+t-\lambda t)(b+\lambda t)}\geq
\frac{t(a(3\lambda -1) +t\lambda ^{2})}{(b+t-\lambda t)(b+\lambda t)}
 \geq 0,$$
and
 \begin{equation*}
 \begin{split}
 s_2-r_3&=\frac{a+\lambda t}{b}-\frac{a+t-\lambda t}{b+t}=
\frac{t( a+(2\lambda -1)b +t\lambda)}{b(b+t)} \\
&\geq \frac{t( b(1-\lambda )+(2\lambda -1)b +t\lambda)}{b(b+t)}
\geq 0.
\end{split}
\end{equation*}
Finally,
 \begin{equation*}
 \begin{split}
s_3-r_4 =\frac{a+t}{b+t-\lambda t}-\frac{a+t-\lambda t}{b+\lambda t}=
\frac{t( -a-t+2a\lambda
+b\lambda +3t\lambda -t\lambda ^{2})}{(b+t-\lambda t)(b+\lambda t) }.
\end{split}
\end{equation*}
If $b\neq a$,  then $b>a+t$. Therefore, we have
\begin{eqnarray*}
-a-t+2a\lambda
+b\lambda +3t\lambda -t\lambda ^{2}&\geq& -a+2a\lambda+(a+t)\lambda+t(3\lambda-1-\lambda^2)\\
&\geq
& a(3\lambda-1)+t(4\lambda-1-\lambda^2)\geq 0.
\end{eqnarray*}
which leads to $s_3-r_4\geq 0$. However, if  $a=b$, then
$$
s_2-r_4=\frac{a+\lambda t}{a}-\frac{a+t-\lambda t}{a+\lambda t}=\dfrac{\lambda^2t^2+at(3\lambda -1)}{a(a+\lambda t)}\geq 0.
$$
Thus, we finish checking that $f(I,J)=J_{1}\cup J_{2}\cup J_{3}\cup J_{4}=[r_1, s_4]=\left[\dfrac{a}{b+t},\dfrac{a+t}{b}\right]$.
\end{proof}

\begin{Lemma}\label{lem1}
We have
\begin{equation}
  \frac{K_{\lambda }}{
K_{\lambda }\setminus \{0\}}=\left \{
\begin{array}{ll}
[0,\infty), &  \textrm{when}\;\; \frac{3-\sqrt{5}}{2}\leq \lambda<1/2\\
 \bigcup_{k=-\infty}^{+\infty}\lambda^k\left[1-\lambda, \frac{1}{1-\lambda}\right]\cup\{0\} & \textrm{when}\;\;
 1/3\leq \lambda<\frac{3-\sqrt{5}}{2}.
 \end{array}
 \right.
\end{equation}
\end{Lemma}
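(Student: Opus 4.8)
The plan is to reduce the computation of the full quotient set $D:=\frac{K_{\lambda}}{K_{\lambda}\setminus\{0\}}$ to one ``core'' interval and then to analyse how the multiplicative translates of that interval fit together. Throughout I write $f(x,y)=x/y$. The two preceding lemmas are tailor-made for this: Lemma \ref{key2} supplies exactly the gap-filling identity $f(I,J)=f(\widetilde I,\widetilde J)$ that lets me run Lemma \ref{key1} with $F=f$ on any pair of basic intervals bounded away from $0$.

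\textbf{Step 1 (multiplicative decomposition).} Since $f_1(K_{\lambda})=\lambda K_{\lambda}\subset[0,\lambda]$ and $f_2(K_{\lambda})=\lambda K_{\lambda}+(1-\lambda)=K_{\lambda}\cap[1-\lambda,1]$, every $x\in K_{\lambda}\setminus\{0\}$ has a symbolic code different from $\overline{1}$; letting $m\ge 0$ be the number of leading $1$'s before the first $2$, one gets $x=\lambda^{m}z$ with $z\in K_{\lambda}\cap[1-\lambda,1]$. Hence
\[
K_{\lambda}\setminus\{0\}=\bigcup_{m=0}^{\infty}\lambda^{m}\big(K_{\lambda}\cap[1-\lambda,1]\big),
\]
and since $\lambda<1/2$ these blocks are pairwise disjoint. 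Writing $x=\lambda^{m}z_x$ and $y=\lambda^{n}z_y$ with $z_x,z_y\in K_{\lambda}\cap[1-\lambda,1]$ gives $x/y=\lambda^{m-n}(z_x/z_y)$, and as $m-n$ runs over all of $\mathbb{Z}$ I obtain
\[
D\setminus\{0\}=\bigcup_{k\in\mathbb{Z}}\lambda^{k}\,f\big(K_{\lambda}\cap[1-\lambda,1],\,K_{\lambda}\cap[1-\lambda,1]\big),
\]
while $0\in D$ comes from $x=0$.

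\textbf{Step 2 (the core is a full interval).} I would then show that the core $f(K_{\lambda}\cap[1-\lambda,1],K_{\lambda}\cap[1-\lambda,1])$ equals $\big[1-\lambda,\frac{1}{1-\lambda}\big]$ by applying Lemma \ref{key1} with $A=1-\lambda$, $B=1$, $k_0=1$ and $U=\{(x,y):y>0\}$; here $G_1=[1-\lambda,1]$ and $[A,B]\times[A,B]\subset U$. Every basic interval contained in $G_n\subset[1-\lambda,1]$ has left endpoint $\ge 1-\lambda$, so Lemma \ref{key2} gives $f(I,J)=f(\widetilde I,\widetilde J)$ whenever the left endpoint of $J$ dominates that of $I$; the reverse ordering follows from the reciprocal symmetry $f(I,J)=1/f(J,I)$ together with the same lemma applied to $(J,I)$. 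Thus the hypothesis of Lemma \ref{key1} holds and the core equals $f(G_1,G_1)=f([1-\lambda,1],[1-\lambda,1])=\big[1-\lambda,\frac{1}{1-\lambda}\big]$. Combining with Step 1,
\[
D=\bigcup_{k=-\infty}^{\infty}\lambda^{k}\Big[1-\lambda,\tfrac{1}{1-\lambda}\Big]\cup\{0\}.
\]

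\textbf{Step 3 (overlap analysis).} Finally I would compare consecutive blocks: $\lambda^{k+1}\big[1-\lambda,\frac{1}{1-\lambda}\big]$ and $\lambda^{k}\big[1-\lambda,\frac{1}{1-\lambda}\big]$ meet or overlap iff $\frac{\lambda^{k+1}}{1-\lambda}\ge\lambda^{k}(1-\lambda)$, i.e. iff $\lambda^2-3\lambda+1\le 0$, which under $\lambda<1/2$ is precisely $\lambda\ge\frac{3-\sqrt5}{2}$. In that regime the blocks chain together and, since $\lambda^{k}\to\infty$ and $\lambda^{k}\to 0$ as $k\to-\infty$ and $k\to+\infty$, their union is all of $(0,\infty)$, so $D=[0,\infty)$; for $\frac13\le\lambda<\frac{3-\sqrt5}{2}$ the blocks are separated by genuine gaps and no further simplification occurs, giving the second alternative. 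The main obstacle is Step 2: one must guarantee that passing to the quotient actually fills in a full interval rather than leaving a Cantor-type residue, and this is exactly the content of the identity $f(I,J)=f(\widetilde I,\widetilde J)$ in Lemma \ref{key2}; the rest — disjointness of the scale blocks, both orderings of $I,J$, and the elementary gap computation — is routine bookkeeping.
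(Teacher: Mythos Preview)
Your proof is correct and follows essentially the same route as the paper: the multiplicative decomposition $K_\lambda\setminus\{0\}=\bigcup_{m\ge 0}\lambda^m f_2(K_\lambda)$, the identification of the core quotient $\frac{f_2(K_\lambda)}{f_2(K_\lambda)}=[1-\lambda,\tfrac{1}{1-\lambda}]$ via Lemmas~\ref{key1} and~\ref{key2}, and the elementary overlap test $\lambda\ge(1-\lambda)^2$ are exactly what the paper does. Your treatment is in fact slightly more careful than the paper's, since you explicitly handle the ordering constraint $b\ge a$ in Lemma~\ref{key2} via the reciprocal symmetry $f(I,J)=1/f(J,I)$, a point the paper glosses over.
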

\begin{proof}
From Lemmas \ref{key1} and \ref{key2} it follows that   if  $\lambda \geq \frac{1}{3}$
$$
\dfrac{f_2(K_{\lambda})}{f_2(K_{\lambda})}=\left[1-\lambda ,\frac{1}{1-\lambda}\right].
$$
 Each $x\in K_\lambda $ can be uniquely represented as
$$
x=\sum _{n=1}^\infty x_n\lambda ^n\;\;\textrm{with}\;\; x_n\in \{0, 1-\lambda \}.
$$
Note that $x\in f_2(K_\lambda )$ if and only if $x_1=1-\lambda $. Thus each $x\in K_\lambda \setminus \{0\}$ is of form
$$
x=\lambda ^mx^*\;\;\textrm{with}\;\; m\in \{0,1,2,\cdots \}, x^*\in f_2(K_\lambda ).
$$
Thus for any two $x=\lambda ^mx^*, y=\lambda ^ny^*\in K_\lambda \setminus \{0\}$ with $x^*, y^*\in f_2(K_\lambda )$ one has
$$
\frac{x}{y}=\lambda ^{m-n}\cdot \frac{x^*}{y^*}\in \lambda ^{m-n}\left [1-\lambda , \frac{1}{1-\lambda }\right ].
$$
Thus
$$
\frac{K_{\lambda }}{
K_{\lambda }\setminus \{0\}}=\{0\}\cup \bigcup_{k=-\infty}^{\infty}\lambda^k\left[1-\lambda, \dfrac{1}{1-\lambda}\right].
$$
It is easy to check that $\{0\}\cup \bigcup_{k=-\infty}^{\infty}\lambda^k\left[1-\lambda, \dfrac{1}{1-\lambda}\right]=[0, +\infty )$ when $\dfrac{3-\sqrt{5}}{2}\leq \lambda<1/2$, and intervals
$\lambda^k\left[1-\lambda, \dfrac{1}{1-\lambda}\right]$ are pairwise disjoint when $1/3\leq \lambda<\dfrac{3-\sqrt{5}}{2}$.
\end{proof}

Pourbarat \cite{Pourbarat}, making use of the thickness of the Cantor sets,  proved the following result.
\setcounter{Theorem}{4}
\begin{Theorem}\label{pou}
If $\dfrac{\lambda^2}{(1-2\lambda)^2}>\lambda$, then $ \dfrac{K_{\lambda }}{
K_{\lambda}\setminus \{0\}}$ contains an interior point.
\end{Theorem}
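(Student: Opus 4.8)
The plan is to recast the statement as an intersection problem for Cantor sets and then bring in Newhouse's thickness machinery. First I would record the elementary equivalence
\[
\alpha\in\frac{K_{\lambda}}{K_{\lambda}\setminus\{0\}}\ \Longleftrightarrow\ \big(K_{\lambda}\cap\alpha K_{\lambda}\big)\setminus\{0\}\neq\emptyset,
\]
where $\alpha K_{\lambda}=\{\alpha y:y\in K_{\lambda}\}$; indeed $\alpha=x/y$ with $x,y\in K_{\lambda}$ and $y\neq0$ means exactly that $x$ is a common nonzero point of $K_{\lambda}$ and $\alpha K_{\lambda}$. Hence it suffices to exhibit a nondegenerate interval $J$ of parameters such that $K_{\lambda}\cap\alpha K_{\lambda}$ contains a nonzero point for every $\alpha\in J$; such a $J$ lies inside the quotient and supplies the desired interior point.

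Next I would invoke the Newhouse thickness $\tau(\cdot)$ together with the Gap Lemma: if two Cantor sets $C_{1},C_{2}$ satisfy $\tau(C_{1})\tau(C_{2})>1$, their convex hulls overlap, and neither set lies in a bounded gap of the other, then $C_{1}\cap C_{2}\neq\emptyset$. Reading off the self-similar gap structure of $K_{\lambda}$ — the largest gap $(\lambda,1-\lambda)$ has length $1-2\lambda$ and is flanked by the two bridges $[0,\lambda]$ and $[1-\lambda,1]$ of length $\lambda$, and every deeper gap reproduces the same ratio by self-similarity — yields $\tau(K_{\lambda})=\lambda/(1-2\lambda)$. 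Since thickness is invariant under the affine scaling $y\mapsto\alpha y$, one also has $\tau(\alpha K_{\lambda})=\lambda/(1-2\lambda)$. A single crude application of the Gap Lemma then forces intersection whenever $\tau(K_{\lambda})^{2}>1$, i.e.\ $\lambda>1/3$, which is weaker than the stated threshold $\lambda^{2}/(1-2\lambda)^{2}>\lambda$ (equivalently $\lambda>1/4$).

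To close this gap I would exploit the self-similarity $K_{\lambda}=f_{1}(K_{\lambda})\cup f_{2}(K_{\lambda})$ and confine the denominators to $f_{2}(K_{\lambda})\subset[1-\lambda,1]$, where the map $(x,y)\mapsto x/y$ has bounded distortion and a logarithmic change of coordinates converts the quotient into a difference of two Cantor sets whose thicknesses are still controlled by $\tau(K_{\lambda})$. The intended mechanism is that descending one renormalization level contributes an extra factor $\lambda$, so that comparing a bridge of one scaled copy against a gap of the other across the relevant band of parameters turns the requirement $\tau^{2}>1$ into $\tau^{2}>\lambda$. Finally, because the Gap Lemma delivers a \emph{stable} intersection once the thickness product strictly dominates and the hulls are properly linked, the intersection persists for all $\alpha$ in a whole interval $J$, which finishes the argument. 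The hard part will be exactly this last bookkeeping: verifying that the linking condition — neither Cantor set dropping into a bounded gap of the other — holds uniformly over a genuine interval of $\alpha$, and tracking precisely how the scaling parameter interacts with the thickness through the renormalization so that the threshold comes out to be $\lambda$ rather than $1$.
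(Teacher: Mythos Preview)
The paper does not give its own proof of this theorem: it is quoted as a result of Pourbarat, with only the remark that the argument ``mak[es] use of the thickness of the Cantor sets.'' So there is nothing to compare against beyond confirming that your thickness-based strategy is in the same spirit as the cited source.

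Your write-up, however, is explicitly a sketch with an unfilled gap, and that gap is the entire content of the theorem. The reduction $\alpha\in K_\lambda/(K_\lambda\setminus\{0\})\Longleftrightarrow (K_\lambda\cap\alpha K_\lambda)\setminus\{0\}\neq\emptyset$, the computation $\tau(K_\lambda)=\lambda/(1-2\lambda)$, and the observation that the plain Gap Lemma only delivers $\lambda>1/3$ are all correct and standard. What is missing is precisely the passage from the threshold $\tau^2>1$ to $\tau^2>\lambda$. Your proposed mechanism --- ``descending one renormalization level contributes an extra factor $\lambda$'' --- is not an argument: you do not specify which two Cantor subsets you apply the Gap Lemma to, you do not verify the linking hypothesis for that pair uniformly over an interval of $\alpha$, and the logarithmic change of variables you invoke does not by itself change the thickness product (bounded distortion on $f_2(K_\lambda)\subset[1-\lambda,1]$ perturbs thickness by a constant depending on the distortion, not by the specific factor $\lambda$). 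Note also that for the \emph{additive} problem the threshold $\lambda\ge 1/3$ is sharp ($K_\lambda+K_\lambda=[0,2]$ exactly when $\lambda\ge 1/3$), so any genuine improvement for the quotient must exploit the multiplicative/nonlinear structure in a concrete way your sketch does not supply. Until you actually carry out Pourbarat's iterated bridge--gap comparison (or an equivalent argument) and exhibit where the factor $\lambda$ on the right-hand side arises, the proposal is a plausible outline rather than a proof.
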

\setcounter{Lemma}{5}
\begin{Lemma}\label{lem2}
If $\dfrac{1}{4}<\lambda<1/3$, then
$ \dfrac{K_{\lambda }}{
K_{\lambda}\setminus \{0\}}$ contains an   interior point.
\end{Lemma}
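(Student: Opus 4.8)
The plan is to derive Lemma \ref{lem2} directly from Theorem \ref{pou}: it suffices to verify that Pourbarat's hypothesis $\dfrac{\lambda^2}{(1-2\lambda)^2} > \lambda$ holds throughout the range $\dfrac{1}{4} < \lambda < \dfrac{1}{3}$, after which the conclusion follows verbatim from that theorem. So the whole argument is the reduction of the thickness condition to an elementary polynomial inequality.

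First I would rewrite the thickness inequality. Since $0 < \lambda < 1/2$ we have $\lambda > 0$ and $(1-2\lambda)^2 > 0$, so dividing by $\lambda$ and clearing the positive denominator gives the equivalent chain
$$\frac{\lambda^2}{(1-2\lambda)^2} > \lambda \iff \frac{\lambda}{(1-2\lambda)^2} > 1 \iff \lambda > (1-2\lambda)^2.$$
Expanding $(1-2\lambda)^2 = 1 - 4\lambda + 4\lambda^2$ and collecting terms, this is equivalent to $4\lambda^2 - 5\lambda + 1 < 0$, that is, $(4\lambda - 1)(\lambda - 1) < 0$.

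Next I would check the sign of the two factors on the interval in question. For $\dfrac{1}{4} < \lambda < \dfrac{1}{3}$ one has $4\lambda - 1 > 0$ while $\lambda - 1 < 0$ (indeed $\lambda < 1/3 < 1$), so the product $(4\lambda-1)(\lambda-1)$ is strictly negative. Hence the hypothesis of Theorem \ref{pou} is satisfied on the whole interval, and $\dfrac{K_{\lambda}}{K_{\lambda}\setminus\{0\}}$ contains an interior point.

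There is essentially no analytic obstacle here: the entire content is the observation that the thickness condition collapses to the quadratic inequality $(4\lambda-1)(\lambda-1) < 0$, whose solution set $(1/4,1)$ comfortably contains $(1/4,1/3)$. The only point deserving a word of care is that the upper endpoint $1/3$ plays no role in the thickness condition itself; it appears in the statement of Lemma \ref{lem2} merely to delimit the regime not already handled by Lemma \ref{lem1} (which covers $\lambda \geq 1/3$), so that together the two results describe the presence of interior points across the full remaining parameter range.
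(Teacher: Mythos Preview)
Your proof is correct and follows exactly the same route as the paper: verify that $\lambda^2/(1-2\lambda)^2>\lambda$ on $(1/4,1/3)$ and then invoke Theorem~\ref{pou}. The only difference is that you supply the algebraic details of the inequality (reducing it to $(4\lambda-1)(\lambda-1)<0$), whereas the paper simply asserts it.
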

\begin{proof}
If $\dfrac{1}{4}<\lambda<1/3$, then $\dfrac{\lambda^2}{(1-2\lambda)^2}>\lambda$. Therefore, $ \dfrac{K_{\lambda }}{
K_{\lambda}\setminus \{0\}}$ contains an interior point by Theorem \ref{pou}.
\end{proof}

\begin{Lemma}\label{lem3}
If $0<\lambda<\dfrac{3-\sqrt{5}}{2}$, then $V$ has an interior point.
\end{Lemma}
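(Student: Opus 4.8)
The plan is to exhibit a single explicit open interval lying inside $V$, valid uniformly across the whole range $0<\lambda<\frac{3-\sqrt{5}}{2}$. The starting point is the factorisation already established inside the proof of Lemma \ref{lem1}, which uses only the digit structure of $K_\lambda$ and hence holds for \emph{every} $0<\lambda<1/2$: each nonzero point of $K_\lambda$ can be written as $\lambda^m x^\ast$ with $m\geq 0$ and $x^\ast\in f_2(K_\lambda)$. Letting $m,n$ range independently, this gives
\[
\frac{K_\lambda}{K_\lambda\setminus\{0\}}=\{0\}\cup\bigcup_{k\in\mathbb Z}\lambda^k R,\qquad R:=\frac{f_2(K_\lambda)}{f_2(K_\lambda)}.
\]
Since $f_2(K_\lambda)\subset[1-\lambda,1]$ with the endpoints $1-\lambda$ and $1$ both attained, one has the crude but sufficient bound $R\subseteq\left[1-\lambda,\frac{1}{1-\lambda}\right]$. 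Crucially, I will \emph{not} need the exact description of $R$ (the interval description requires $\lambda\geq 1/3$); only this containment is used, which is what lets one interval serve the full range.

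My candidate gap is $G:=\left(\frac{\lambda}{1-\lambda},\,1-\lambda\right)$. First I would record that $G\neq\emptyset$ exactly when $\frac{\lambda}{1-\lambda}<1-\lambda$, i.e. $\lambda<(1-\lambda)^2$, i.e. $\lambda^2-3\lambda+1>0$; as $\lambda<1/2$ this is equivalent to $\lambda<\frac{3-\sqrt{5}}{2}$, which is precisely the hypothesis.

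Next I would show $G$ is disjoint from $\frac{K_\lambda}{K_\lambda\setminus\{0\}}$ by testing each scaled copy against its enclosing interval $\lambda^kR\subseteq\left[\lambda^k(1-\lambda),\frac{\lambda^k}{1-\lambda}\right]$. The two ``boundary'' copies abut $G$: for $k=0$ one has $\min R\geq 1-\lambda=\sup G$, and for $k=1$ one has $\max(\lambda R)\leq\frac{\lambda}{1-\lambda}=\inf G$. Every other copy lies strictly to one side: for $k\geq 2$, $\max(\lambda^kR)\leq\frac{\lambda^k}{1-\lambda}<\frac{\lambda}{1-\lambda}=\inf G$, while for $k\leq -1$, $\min(\lambda^kR)\geq\lambda^k(1-\lambda)\geq\frac{1-\lambda}{\lambda}>1-\lambda=\sup G$. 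Hence $G\cap\lambda^kR=\emptyset$ for all $k$, and trivially $0\notin G$, so $G\subset V$ and every point of $G$ is an interior point of $V$.

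The only real subtlety, and the step I would treat as the crux, is the decision to discard the precise self-similar structure of $R$ and retain only the containment $R\subseteq[1-\lambda,\frac{1}{1-\lambda}]$: this is exactly what makes a single interval work simultaneously on $0<\lambda<1/3$ and on $1/3\leq\lambda<\frac{3-\sqrt{5}}{2}$. (On the latter subrange the conclusion can alternatively be read straight off Lemma \ref{lem1}, since there the intervals $\lambda^k\left[1-\lambda,\frac{1}{1-\lambda}\right]$ are pairwise disjoint; the argument above simply unifies both cases, and in fact $G$ is nothing but the gap between the $\lambda^1$ and $\lambda^0$ copies.)
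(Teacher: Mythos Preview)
Your proof is correct and follows essentially the same route as the paper's: both rely on the containment $\dfrac{K_\lambda}{K_\lambda\setminus\{0\}}\subseteq\{0\}\cup\bigcup_{k\in\mathbb Z}\lambda^k\left[1-\lambda,\tfrac{1}{1-\lambda}\right]$ (obtained from the factorisation in Lemma~\ref{lem1} together with $f_2(K_\lambda)\subset[1-\lambda,1]$) and the observation that these intervals are pairwise disjoint precisely when $\lambda<\tfrac{3-\sqrt5}{2}$. The paper stops at ``pairwise disjoint, hence $V$ has interior'', whereas you additionally name the explicit gap $G=\bigl(\tfrac{\lambda}{1-\lambda},\,1-\lambda\bigr)$ between the $k=1$ and $k=0$ copies; this is a nice touch but not a different method.
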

\begin{proof}
Note that $f_2(K_{\lambda})\subset[1-\lambda,1]$. Thus, by the argument in Lemma \ref{lem1}, we have
$$
\dfrac{K_{\lambda}}{K_{\lambda}\setminus \{0\}}\subseteq \{0\} \cup\bigcup_{k=-\infty}^{\infty}  \lambda^k\left[1-\lambda, \dfrac{1}{1-\lambda}\right].
$$
Note that the intervals $\left[\lambda^k(1-\lambda ), \dfrac{\lambda^k}{1-\lambda}\right]$ for $k\in \mathbb Z$ are pairwise disjoint when $0<\lambda<\dfrac{3-\sqrt{5}}{2}$.
Therefore,  $V$ has an interior point by (\ref{setV}).
\end{proof}

\begin{Lemma}\label{lem4}
If $0<\lambda<\dfrac{1}{4}$, then $\dfrac{K_{\lambda}}{K_{\lambda}\setminus \{0\}}$ has Lebesgue measure zero.
\end{Lemma}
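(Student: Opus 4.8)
The plan is to reduce the statement to a single Hausdorff‑dimension estimate and then exploit the multiplicative scaling structure. First I would record the factorization already implicit in the proofs of Lemmas \ref{lem1} and \ref{lem3}. Writing each nonzero $x\in K_{\lambda}$ in its base‑$\lambda$ expansion $x=\sum_{n\geq 1}x_n\lambda^n$ with $x_n\in\{0,1-\lambda\}$ (the representation is unique since $\lambda<1/2$) and factoring out the leading zero digits, every $x\in K_{\lambda}\setminus\{0\}$ has the form $x=\lambda^{m}x^{*}$ with $m\geq 0$ and $x^{*}\in L:=f_2(K_{\lambda})$. Hence for $x=\lambda^{m}x^{*}$, $y=\lambda^{n}y^{*}$ one gets $x/y=\lambda^{m-n}(x^{*}/y^{*})$, so
$$\frac{K_{\lambda}}{K_{\lambda}\setminus\{0\}}=\{0\}\cup\bigcup_{k\in\mathbb{Z}}\lambda^{k}\,\frac{L}{L}.$$
Since Lebesgue measure is countably subadditive and scales by the factor $\lambda^{k}$, it suffices to prove $m\left(\frac{L}{L}\right)=0$.

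Next I would bound the dimension of $\frac{L}{L}$. Because $L=f_2(K_{\lambda})=\lambda K_{\lambda}+(1-\lambda)\subset[1-\lambda,1]$ is bounded away from $0$, the map $g(x,y)=x/y$ is $C^{1}$ with bounded partial derivatives on the compact square $[1-\lambda,1]^{2}$, hence Lipschitz there; and a Lipschitz map does not increase Hausdorff dimension, so
$$\dim_H\frac{L}{L}=\dim_H g(L\times L)\leq\dim_H(L\times L).$$
As $0<\lambda<1/2$, the IFS $\{f_1,f_2\}$ satisfies the open set condition, so $K_{\lambda}$ and its similar copy $L$ satisfy $\dim_H L=\overline{\dim}_B L=\frac{\log 2}{-\log\lambda}$. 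Applying the standard product bound $\dim_H(A\times B)\leq\dim_H A+\overline{\dim}_B B$ with $A=B=L$ yields
$$\dim_H(L\times L)\leq\frac{2\log 2}{-\log\lambda}=\frac{\log 4}{-\log\lambda}.$$

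Finally, for $0<\lambda<1/4$ we have $-\log\lambda>\log 4$, so $\frac{\log 4}{-\log\lambda}<1$. A set of Hausdorff dimension strictly less than $1$ is $\mathcal{H}^{1}$‑null, hence Lebesgue‑null, giving $m\left(\frac{L}{L}\right)=0$. Each $\lambda^{k}\frac{L}{L}$ is then a scaled copy of a null set and so is null, the countable union over $k\in\mathbb{Z}$ stays null, and adjoining the single point $0$ changes nothing; therefore $m\left(\frac{K_{\lambda}}{K_{\lambda}\setminus\{0\}}\right)=0$.

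I expect the only substantive step to be the product‑dimension bound $\dim_H(L\times L)\leq\frac{\log 4}{-\log\lambda}$; the Lipschitz reduction, the scaling argument, and the implication ``$\dim_H<1\Rightarrow$ measure $0$'' are routine. Two points merit a quick sanity check: that the open set condition genuinely applies (it does, since $\lambda<1/2$ makes $f_1((0,1))$ and $f_2((0,1))$ disjoint), and that the leading‑digit factorization in the first step is valid for \emph{all} $\lambda<1/2$, not only for $\lambda\geq 1/3$ as in Lemma \ref{lem1}. Note also that the argument uses the strict inequality $\lambda<1/4$ in an essential way, since at $\lambda=1/4$ the dimension equals $1$ and the null conclusion would require a finer estimate.
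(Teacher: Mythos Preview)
Your proof is correct, and the underlying strategy---show $\dim_H<1$ via a product bound and conclude Lebesgue-nullity---matches the paper's. The executions differ, however. The paper does not first pass to $L=f_2(K_\lambda)$; instead it works with the whole quotient, takes logarithms to turn $x/y$ into $\ln x-\ln y$, views the difference set as a $45^\circ$ projection of $\ln(K_\lambda\setminus\{0\})\times\ln(K_\lambda\setminus\{0\})$, and then applies $\dim_H(A\times B)\le\dim_H A+\dim_P B$ together with the invariance of Hausdorff and packing dimension under $\ln$. Your route---factor $\frac{K_\lambda}{K_\lambda\setminus\{0\}}$ as $\{0\}\cup\bigcup_{k}\lambda^k(L/L)$ and use that $(x,y)\mapsto x/y$ is Lipschitz on $[1-\lambda,1]^2$---is arguably cleaner: it avoids the mild technicality that $\ln$ is not Lipschitz near $0$ (so the paper's assertion ``$\dim_H E=\dim_H\ln E$ for bounded $E\subset(0,\infty)$'' tacitly requires a countable decomposition of $K_\lambda\setminus\{0\}$ into pieces bounded away from $0$). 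Your sanity checks are all on point; in particular the factorization step holds for every $\lambda<1/2$, not only for $\lambda\ge 1/3$.
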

\begin{proof}
If $0<\lambda<\dfrac{1}{4}$, then
$$\dim_{H}K_{\lambda}+\dim_{H}K_{\lambda}<1.$$
We note that for any $X,\,Y\subseteq \mathbb{R}$, we have
$Y-X=\Pi_{\frac{\pi}{4}}(X\times Y)$, where $\Pi_{\frac{\pi}{4}}(X\times
Y)$ denotes the projection of $X\times Y$ on the $y$ axis along lines having $\frac{\pi}{4}$ angle with the $x$ axis.
Therefore,
\begin{eqnarray*}
\begin{split}
\dim_{H}\dfrac{K_{\lambda}}{K_{\lambda}\setminus \{0\}}&=\dim_{H}\dfrac{K_{\lambda}\setminus \{0\}}{K_{\lambda}\setminus \{0\}}=\dim_{H}(\ln (K_{\lambda}\setminus \{0\})-\ln (K_{\lambda}\setminus \{0\})\\
\;&\leq \dim_{H}(\ln (K_{\lambda}\setminus \{0\})\times \ln (K_{\lambda}\setminus \{0\})\\
\;&\leq
\dim_{H}(\ln K_{\lambda}\setminus \{0\})+\dim_{P}(\ln K_{\lambda}\setminus \{0\})\\
\:&=2\dim_{H}( K_{\lambda}\setminus \{0\})<1,
\end{split}
\end{eqnarray*}
where we use the fact that $\dim _HE=\dim _H\ln E$ and $\dim _PE=\dim _P\ln E$.
for a bounded set $E\subset (0, +\infty )$. Thus, $\dfrac{K_{\lambda}}{K_{\lambda}\setminus \{0\}}$ has Lebesgue measure zero.
\end{proof}

We will use a result given by Simon and Solomyak \cite{SimonSolomyak}.
\setcounter{Theorem}{9}
\begin{Theorem} \label{Simon and Solomyak}
 Let $\Lambda$ be a self-similar  $1$-set in $\mathbb{R}^2$ with the open set condition, which is not on a line. Then
$$m(P_{(0,0)}(\Lambda\setminus\{(0,0)\}))=0,$$
where $$P_{(0,0)}:\mathbb{R}^2\setminus{(0,0)}\to S^1, P_{(0,0)}(\vec{x})=\dfrac{\vec{x}}{|\vec{x}|}.$$
\end{Theorem}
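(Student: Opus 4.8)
The plan is to deduce the statement from the structure theory of rectifiable sets together with a projection estimate adapted to the \emph{fixed} centre of projection. First I would record the structural dichotomy for self-similar sets: a self-similar set satisfying the open set condition and having similarity dimension one carries positive and finite $\mathcal H^1$-measure, and such a set is \emph{either contained in a line or purely $1$-unrectifiable}. The reason is homogeneity: if the rectifiable part of $\Lambda$ had positive $\mathcal H^1$-measure, then $\mathcal H^1$-almost every point of it would possess an approximate tangent line; feeding this through the symbolic coding $\pi\colon\{1,\dots,m\}^{\mathbb N}\to\Lambda$ and the shift-invariance of the natural (Bernoulli) measure, the presence of a tangent at a positive-measure set of points propagates to almost every cylinder and every scale, which forces $\mathcal H^1$-a.e.\ point of $\Lambda$ to have a tangent and ultimately pins $\Lambda$ to a single line. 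Since by hypothesis $\Lambda$ is not on a line, this shows $\Lambda$ is purely unrectifiable, which is the hypothesis on which every projection theorem for $1$-sets rests.

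Second, I would convert the visibility statement into an intersection count. Writing $R_\theta$ for the ray from the origin in direction $\theta$, one has $m\big(P_{(0,0)}(\Lambda\setminus\{(0,0)\})\big)\le \int_{S^1}\#(\Lambda\cap R_\theta)\,d\theta$, so it suffices to show that this integralgeometric quantity vanishes. By the Besicovitch--Federer theory the integralgeometric measure of a purely unrectifiable $\mathcal H^1$-finite set is zero, and the corresponding \emph{radial} projection theorem then gives $m\big(P_x(\Lambda\setminus\{x\})\big)=0$ for Lebesgue-almost every centre $x$, where $P_x$ denotes radial projection from $x$. The delicate point is that we need this not for almost every centre but for the single centre $x=(0,0)$.

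Third---and this is where I expect the main obstacle to lie---I would control the fixed centre by exploiting self-similarity. The key identity is that for each map $S_i(z)=\rho_iO_iz+t_i$ of the IFS one has $P_{(0,0)}\circ S_i = R_{O_i}\circ P_{c_i}$ on $\Lambda$, where $R_{O_i}$ is the rotation of $S^1$ induced by $O_i$ and $c_i=S_i^{-1}(0)=-O_i^{-1}t_i/\rho_i$; hence $P_{(0,0)}(\Lambda)=\bigcup_i R_{O_i}\,P_{c_i}(\Lambda)$ and, upon iterating, visibility from the origin is tied to visibility from the whole family of expanded centres $c_\omega=S_\omega^{-1}(0)$. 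If the origin were an exceptional (positively visible) centre, this relation would propagate positive visibility down an infinite branch of centres, while the self-similar zooming realises the local picture of $\Lambda$ near the relevant points as rescaled copies of $\Lambda$ itself; a tangent/density analysis of these blow-ups then contradicts pure unrectifiability. Making this last step quantitative is exactly the content of the transversality method of Simon and Solomyak, and the hardest part is to show that the fixed centre is genuinely non-exceptional, rather than merely almost every centre being so; the finiteness $\mathcal H^1(\Lambda)<\infty$ and the open set condition are what keep the relevant densities bounded and drive the contradiction.
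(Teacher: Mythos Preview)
The paper does not prove this theorem at all: it is quoted verbatim as a result of Simon and Solomyak \cite{SimonSolomyak} and then \emph{applied} in the proof of Lemma~2.10. So there is no ``paper's own proof'' to compare your attempt against; what you have written is a sketch of how one might try to reprove the cited external result.

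With that understood, your first two steps are correct and standard. A self-similar $1$-set with the open set condition that is not contained in a line is purely $1$-unrectifiable (this goes back to Hutchinson/Mattila and can be argued essentially as you indicate), and Besicovitch--Federer together with Marstrand's radial projection theorem then yields $m(P_x(\Lambda))=0$ for $\mathcal H^2$-almost every centre $x\in\mathbb R^2$.

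The gap is entirely in your third step, and it is a genuine one. Your identity $P_{(0,0)}\circ S_i = R_{O_i}\circ P_{c_i}$ is correct and does give
\[
m\big(P_{(0,0)}(\Lambda)\big)\ \le\ \sum_{|\omega|=n} m\big(P_{c_\omega}(\Lambda)\big),
\]
so positive visibility from the origin forces positive visibility from at least one $c_\omega$ at every level $n$. But a \emph{countable} family of exceptional centres does not contradict an almost-everywhere statement, so ``propagating down a branch'' by itself proves nothing. Your proposed way out --- ``a tangent/density analysis of these blow-ups then contradicts pure unrectifiability'' --- is not an argument: positive visibility from a point $c$ says something about the \emph{angular} distribution of $\Lambda$ as seen from $c$, not about the existence of an approximate tangent at any point of $\Lambda$, and you give no mechanism to convert the former into the latter. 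Indeed, you concede as much in the last sentence by saying that ``making this last step quantitative is exactly the content of the transversality method of Simon and Solomyak'', which is circular: that \emph{is} the theorem you are trying to prove. The actual Simon--Solomyak argument uses the open set condition and the $1$-set property in a more delicate way (uniform density bounds on $\mathcal H^1|_\Lambda$ combined with an energy/averaging estimate over the self-similar tree of centres), and this genuine analytic input is absent from your sketch.
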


\begin{Lemma}\label{lem5}
$\dfrac{K_{1/4}}{K_{1/4}\setminus \{0\}}$ has Lebesgue measure zero.
\end{Lemma}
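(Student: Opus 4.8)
The plan is to identify $\frac{K_{1/4}}{K_{1/4}\setminus\{0\}}$ with the set of slopes of points of $\Lambda:=K_{1/4}\times K_{1/4}$ seen from the origin, and then to apply Theorem \ref{Simon and Solomyak}; this is exactly the borderline case $\dim_H K_{1/4}+\dim_H K_{1/4}=1$ that the dimension count of Lemma \ref{lem4} cannot reach. First I would verify the hypotheses of Theorem \ref{Simon and Solomyak} for $\Lambda$. It is the attractor of the product IFS $\{(x,y)\mapsto(f_i(x),f_j(y))\}_{i,j\in\{1,2\}}$, four similarities each of ratio $1/4$. Taking the open unit square as the open set and using $f_1((0,1))=(0,1/4)$, $f_2((0,1))=(3/4,1)$, which are disjoint, the open set condition holds. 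The similarity dimension $s$ solves $4\cdot(1/4)^s=1$, so $s=1$, and the standard theory for self-similar sets with the open set condition gives $0<\mathcal H^1(\Lambda)<\infty$; thus $\Lambda$ is a self-similar $1$-set. Since $\Lambda$ contains $(0,0),(1,0),(0,1)$ it is not contained in a line, so Theorem \ref{Simon and Solomyak} yields $m(P_{(0,0)}(\Lambda\setminus\{(0,0)\}))=0$.

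Next I would transfer this to the ratio set. As $K_{1/4}\subset[0,1]$, every $(x,y)\in\Lambda\setminus\{(0,0)\}$ lies in the closed first quadrant, so $P_{(0,0)}(x,y)=(\cos\theta,\sin\theta)$ for a unique $\theta\in[0,\pi/2]$ with $\tan\theta=y/x$; here $y\neq0$ corresponds to $\theta\in(0,\pi/2]$, in which case the quotient is $x/y=\cot\theta$. Hence
$$\frac{K_{1/4}}{K_{1/4}\setminus\{0\}}=\{0\}\cup\{\cot\theta : (\cos\theta,\sin\theta)\in P_{(0,0)}(\Lambda\setminus\{(0,0)\}),\ \theta\in(0,\pi/2)\}.$$
Identifying $S^1$ with arc length (so $m$ on $S^1$ is $d\theta$), the set of admissible angles has measure zero. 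The only care needed is that $\cot$ is not Lipschitz near $0$, so I would cover $(0,\infty)=\bigcup_{n\geq1}[1/n,n]$: the ratios lying in $[1/n,n]$ arise from angles in the compact interval $[\operatorname{arccot}n,\operatorname{arccot}(1/n)]\subset(0,\pi/2)$, on which $\cot$ has bounded derivative and is therefore Lipschitz. A Lipschitz map sends the measure-zero set of admissible angles to a measure-zero set of ratios, so $\frac{K_{1/4}}{K_{1/4}\setminus\{0\}}\cap[1/n,n]$ has measure zero for each $n$. Writing $\frac{K_{1/4}}{K_{1/4}\setminus\{0\}}$ as the union of $\{0\}$ with these countably many sets shows it has Lebesgue measure zero.

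The main obstacle is the verification that the product is genuinely a $1$-set (not merely of Hausdorff dimension $1$), since this is what makes Theorem \ref{Simon and Solomyak} applicable, together with arranging the Lipschitz transfer so as to avoid the singularity of $\cot$ at the axes. As an alternative to the compact covering, one could instead use the scaling relation $\frac{K_{1/4}}{K_{1/4}\setminus\{0\}}=\{0\}\cup\bigcup_{k\in\mathbb{Z}}\lambda^k\frac{f_2(K_{1/4})}{f_2(K_{1/4})}$ coming from the argument in Lemma \ref{lem1} (valid for every $\lambda$) to reduce the problem to the single compact scale $f_2(K_{1/4})\subset[3/4,1]$, where $\cot$ is automatically Lipschitz.
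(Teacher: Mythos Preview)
Your proposal is correct and follows essentially the same route as the paper: verify that $\Lambda=K_{1/4}\times K_{1/4}$ is a self-similar $1$-set with the open set condition not lying on a line, apply Theorem~\ref{Simon and Solomyak} to obtain that the radial projection has measure zero on $S^1$, and then transfer this to the ratio set via the angle--slope correspondence. If anything, your treatment of the transfer step is more careful than the paper's, which passes from the angle set $\Gamma_2$ to the ratio set without explicitly addressing that $\tan$ (or $\cot$) fails to be globally Lipschitz near the axes; your compact exhaustion $[1/n,n]$ handles this cleanly.
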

\begin{proof}
Note that when $\lambda=1/4$, $\Lambda=K_{\lambda}\times K_{\lambda}$ is a self-similar set with the following IFS
$$g_1(x,y)=\left(\dfrac{x}{4}, \dfrac{y}{4}\right), g_2(x,y)=\left(\dfrac{x+3}{4}, \dfrac{y}{4}\right)$$
$$ g_3(x,y)=\left(\dfrac{x+3}{4}, \dfrac{y+3}{4}\right), g_4(x,y)=\left(\dfrac{x}{4}, \dfrac{y+3}{4}\right).$$
Clearly, the above IFS satisfies the open set condition. Therefore, the Hausdorff dimension of $\Lambda$ is 1, and  $0<\mathcal{H}^{1}(\Lambda)<\infty$.
Let $$\Gamma=\left\{\dfrac{(x,y)}{\sqrt{x^2+y^2}}\in S^1:(x,y)\in K_{\lambda}\times K_{\lambda}\setminus\{(0,0)\}\right\}=P_{(0,0)}(\Lambda\setminus\{(0,0)\})$$
The Lebesgue measure of $\Gamma$ is $0$ due to Theorem \ref{Simon and Solomyak}.
Let
$$\Gamma_1=\left\{\dfrac{(x,y)}{\sqrt{x^2+y^2}}\in S^1:(x,y)\in K_{\lambda}\times K_{\lambda}\setminus\{(0,0)\}, x\neq 0\right\}.$$
Clearly, $m(\Gamma_1)=m(\Gamma)=0$.
  The metric on $\Gamma_1$, denoted by $d_1$, is the arc metric. It is well known that on $S^1$, the arc metric is equivalent to the Euclidean metric.
Let $$\Gamma_2=\left\{\arctan \dfrac{y}{x}:(x,y)\in K_{\lambda}\times K_{\lambda}\setminus\{(0,0)\}, x\neq 0 \right\}.$$ The  metric on  $\Gamma_2$ the Euclidean metric (we denote it by $d_2$).
We define the the  map
$$\phi: \Gamma_1\to \Gamma_2,$$ by $$ \phi\left(\dfrac{(x,y)}{\sqrt{x^2+y^2}}\right)=\arctan \dfrac{y}{x}.$$
The map $\phi$ is indeed mapping a point on $S^1$ into its associated  polar angle in the polar coordinate system. Therefore, we may define
$\phi$ in another way as follows:
define $$\phi: \Gamma_1\to \Gamma_2, \;\;\phi(\vec{a})=\theta_{\vec{a}}$$
Clearly, $\phi$ is well-defined, and it is a bijection. Moreover, we shall prove that $\phi$ is a Lipschitz map,  i.e. there exists some constant $L>0$ such that
$$d_2(\phi(\vec{a}), \phi(\vec{b}))\leq L d_1(\vec{a},\vec{b}).$$
Note that $d_2(\phi(\vec{a}), \phi(\vec{b}))=d_2(\theta_{\vec{a}},\theta_{\vec{b}})$, and that
$$d_1(\vec{a},\vec{b})=d_2(\theta_{\vec{a}}\cdot 1, \theta_{\vec{b}}\cdot 1)=d_2(\theta_{\vec{a}}, \theta_{\vec{b}}).$$
Now,  $m(\Gamma_2)=0$ follows from  $\phi(\Gamma_1)=\Gamma_2$,  $m(\Gamma_1)=0$, and $\phi$ is Lipschitz.
Theorefore, $m\left(\dfrac{K_{1/4}}{K_{1/4}\setminus \{0\}}\right)=0.$
\end{proof}
The following is from
B{\'a}r{\'a}ny  \cite{BABA1}.
\begin{Theorem}\label{BBB}
Let $\Lambda$ be an arbitrary self-similar set in $\mathbb{R}^2$ not contain in any line. Suppose that
$g:\mathbb{R}^2\to \mathbb{R}$ is a $C^2$ map such that
$$(g_x)^2+(g_y)^2\neq 0, (g_{xx}g_y-g_{xy}g_x)^2+(g_{xy}g_y-g_{yy}g_x)^2\neq 0$$
for any $(x,y)\in \Lambda$. Then
$$\dim_{H}g(\Lambda)=\min\{1,\dim_{H}(\Lambda)\}.$$
\end{Theorem}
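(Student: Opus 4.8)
The plan is to separate the claimed equality into the easy upper bound and the substantial lower bound. For the upper bound, observe that $g$ is $C^1$ and hence Lipschitz on the compact set $\Lambda$, so $\dim_H g(\Lambda)\le \dim_H\Lambda$; since $g(\Lambda)\subseteq\mathbb{R}$ we also have $\dim_H g(\Lambda)\le 1$, and together these give $\dim_H g(\Lambda)\le\min\{1,\dim_H\Lambda\}$ immediately. All the difficulty lies in the reverse inequality $\dim_H g(\Lambda)\ge\min\{1,\dim_H\Lambda\}$.

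First I would decode the two hypotheses geometrically. The condition $(g_x)^2+(g_y)^2\ne 0$ says $\nabla g\ne 0$, so $g$ is a submersion near $\Lambda$: its fibres are smooth curves and, to first order, $g$ is orthogonal projection onto the gradient direction $\theta(p):=\arg\nabla g(p)$. Differentiating $\nabla g$ along a level curve, whose tangent is $(-g_y,g_x)$, produces exactly the vector $(-(g_{xx}g_y-g_{xy}g_x),\,-(g_{xy}g_y-g_{yy}g_x))$, so the second hypothesis says precisely that this derivative never vanishes on $\Lambda$; equivalently, the gradient direction $\theta(\cdot)$ is nowhere stationary and the fibres have nonzero curvature. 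Thus $g$ is \emph{genuinely nonlinear} along $\Lambda$: its effective projection direction varies from point to point, and the hypothesis that $\Lambda$ is not contained in a line guarantees this variation is not killed by $\Lambda$ collapsing to a single direction.

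Next I would pass to measures and linearize. Fix a self-similar measure $\mu$ on $\Lambda$ with $\dim\mu$ arbitrarily close to $\dim_H\Lambda$ and aim for $\dim_H(g_*\mu)\ge\min\{1,\dim\mu\}$; letting $\dim\mu\uparrow\dim_H\Lambda$ then finishes the job. On a cylinder of diameter $\sim r$ around a point $p$ the $C^2$ hypothesis gives $g(x)=g(p)+\nabla g(p)\cdot(x-p)+O(r^2)$, while the linear term has size $\sim r$; rescaling by $1/r$ (the natural self-similar magnification) makes the quadratic error negligible, so at small scales $g$ restricted to the cylinder is, up to a controlled bi-Lipschitz error and an affine normalisation, the linear projection $\pi_{\theta(p)}$ applied to a magnified copy of $\Lambda$. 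The essential external input is then a projection theorem for self-similar sets (Hochman--Shmerkin and its refinements): the set of exceptional directions $\theta$ for which $\dim_H\pi_\theta(\Lambda)<\min\{1,\dim_H\Lambda\}$ has Hausdorff dimension zero.

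Finally I would assemble the local estimates into a global lower bound, which I expect to be the main obstacle. Because the good direction $\theta(p)$ varies with $p$, one cannot simply quote a single projection bound; one must show that the local dimension of $g_*\mu$ at $g(p)$ is at least $\min\{1,\dim\mu\}$ for $\mu$-a.e.\ $p$. I would carry this out through a scaling/local-entropy argument in the spirit of the CP-chain or Ledrappier--Young formalism: decompose $\mu$ along a Markov filtration of cylinders, use the linearization to identify the conditional measures at each scale with projections of magnified pieces of $\mu$ in a direction close to $\theta(p)$, and combine the dimension-zero exceptional set with the non-stationarity of $\theta$ to obtain a uniform-in-scale contribution to the entropy of $g_*\mu$; summing these contributions gives $\dim_H(g_*\mu)\ge\min\{1,\dim\mu\}$. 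The delicate points are the uniform control of the $O(r^2)$ error across \emph{all} scales and, above all, the transfer from ``almost every direction is good'' to ``the particular varying direction $\theta(p)$ is good for $\mu$-a.e.\ $p$''. It is exactly here that the second hypothesis is indispensable: the nonvanishing curvature prevents $\theta$ from being constant on a positive-measure piece of $\mu$, so the $\mu$-mass sitting over the exceptional directions is negligible, and the nonlinear image inherits the full dimension.
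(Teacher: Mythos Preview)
The paper does not prove this theorem at all: it is quoted verbatim as an external result of B\'ar\'any \cite{BABA1} and then applied in Lemma~\ref{dimm}. There is therefore no ``paper's own proof'' to compare your proposal against.

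That said, your sketch is broadly aligned with the circle of ideas actually used in B\'ar\'any's paper (linearization of the $C^2$ map on small cylinders, reduction to linear projections of self-similar sets, and an appeal to the Hochman--Shmerkin projection theorem together with a transversality/curvature argument to avoid the zero-dimensional exceptional set). However, what you have written is a plan rather than a proof: you explicitly flag the assembly of local estimates into a global lower bound as ``the main obstacle'' and leave the CP-chain/local-entropy argument at the level of an outline. The genuinely hard step---showing that the varying direction $\theta(p)$ is $\mu$-a.e.\ a good direction and that the scale-by-scale entropy contributions sum correctly---requires substantial machinery that you have named but not carried out. If your intent was to reproduce the paper's treatment, the correct move is simply to cite B\'ar\'any, as the authors do.
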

\begin{Lemma}\label{dimm}
When $0< \lambda\leq \dfrac{1}{4}$, $\dim _H([0,+\infty )\setminus V)=\dfrac{\log 4}{-\log \lambda}$.
\end{Lemma}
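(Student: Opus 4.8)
The plan is to identify $[0,+\infty)\setminus V$ with the ratio set $\frac{K_\lambda}{K_\lambda\setminus\{0\}}$ through (\ref{setV}), to reduce that set by self-similar scaling to a single bounded ``top'' copy lying away from the singular line $\{y=0\}$, and then to apply Theorem \ref{BBB} to the smooth map $g(x,y)=x/y$. First I would record, exactly as in the proof of Lemma \ref{lem1}, the digit decomposition: every nonzero $x\in K_\lambda$ is uniquely of the form $x=\lambda^m x^*$ with $m\geq 0$ and $x^*\in f_2(K_\lambda)$. This step uses only the uniqueness of the expansion and is therefore valid for all $0<\lambda<1/2$ (it does not need $\lambda\geq 1/3$). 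Writing ratios in this form yields the scaling identity
$$\frac{K_\lambda}{K_\lambda\setminus\{0\}}=\{0\}\cup\bigcup_{k\in\Z}\lambda^k D,\qquad D:=\frac{f_2(K_\lambda)}{f_2(K_\lambda)}.$$
Since Hausdorff dimension is invariant under the similarities $x\mapsto\lambda^k x$, is countably stable, and is unchanged by adjoining the single point $0$, it follows that $\dim_H([0,+\infty)\setminus V)=\dim_H D$.

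Next I would write $D=g\bigl(f_2(K_\lambda)\times f_2(K_\lambda)\bigr)$ with $g(x,y)=x/y$. The set $f_2(K_\lambda)\times f_2(K_\lambda)$ is a self-similar set (the product of four similarities of ratio $\lambda$), it is not contained in any line, and it lies in $[1-\lambda,1]^2$, hence inside a neighborhood on which $g$ is $C^\infty$ and on which $y$ is bounded away from $0$. A direct computation gives $(g_x)^2+(g_y)^2=(x^2+y^2)/y^4$ and $(g_{xx}g_y-g_{xy}g_x)^2+(g_{xy}g_y-g_{yy}g_x)^2=(x^2+y^2)/y^8$, both strictly positive on $[1-\lambda,1]^2$. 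Thus the hypotheses of Theorem \ref{BBB} are satisfied, and
$$\dim_H D=\min\bigl\{1,\dim_H(f_2(K_\lambda)\times f_2(K_\lambda))\bigr\}.$$

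To finish I would compute the dimension of the product. Since $f_2(K_\lambda)$ is a similar copy of $K_\lambda$, the product is the affine image of $K_\lambda\times K_\lambda$, which (as in Lemma \ref{lem5}) is self-similar under four similarities of ratio $\lambda$ whose images are the four corner squares of $[0,1]^2$; these have disjoint interiors for every $\lambda<1/2$, so the open set condition holds. By the Moran--Hutchinson formula its dimension is the solution $s$ of $4\lambda^s=1$, namely $s=\frac{\log 4}{-\log\lambda}$. For $0<\lambda\leq 1/4$ one has $\frac{\log 4}{-\log\lambda}\leq 1$, so the minimum above equals $\frac{\log 4}{-\log\lambda}$, and therefore $\dim_H([0,+\infty)\setminus V)=\dim_H D=\frac{\log 4}{-\log\lambda}$.

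The main obstacle is the singularity of $g(x,y)=x/y$ along $K_\lambda\times\{0\}\subset K_\lambda\times K_\lambda$, which blocks a direct application of Theorem \ref{BBB} to the full product. The scaling decomposition in the first paragraph is precisely what circumvents this: it replaces the whole set by the top copy $f_2(K_\lambda)\times f_2(K_\lambda)\subset[1-\lambda,1]^2$, on which $g$ is smooth and nondegenerate. The remaining technical points—the verification of the two curvature-type nondegeneracy conditions and the legitimacy of the countably-stable, scaling-invariant dimension reduction—are routine.
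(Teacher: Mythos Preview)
Your proposal is correct and follows essentially the same route as the paper: reduce $[0,+\infty)\setminus V=\dfrac{K_\lambda}{K_\lambda\setminus\{0\}}$ to the single bounded piece $D=\dfrac{f_2(K_\lambda)}{f_2(K_\lambda)}$ via the scaling decomposition from Lemma~\ref{lem1}, then apply Theorem~\ref{BBB} to $g(x,y)=x/y$ on $f_2(K_\lambda)\times f_2(K_\lambda)$ and identify the resulting minimum with $\tfrac{\log 4}{-\log\lambda}$ for $\lambda\le 1/4$. Your version is in fact a bit more explicit (you compute the derivatives and justify the product-dimension formula via the open set condition), but the architecture is identical.
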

\begin{proof}
By the argument in Lemma \ref{lem1}, we have
$$ \frac{K_{\lambda }}{
K_{\lambda }\setminus \{0\}}=\bigcup_{k=-\infty}^{\infty}\lambda^k\dfrac{f_2(K_{\lambda})}{f_2(K_{\lambda})}\cup\{0\}.
$$
Thus
$$
\dim_{H}\frac{K_{\lambda }}{
K_{\lambda }\setminus \{0\}}=\dim_{H}\dfrac{f_2(K_{\lambda})}{f_2(K_{\lambda})}.$$
Clearly, $\Lambda=f_2(K_{\lambda})\times f_2(K_{\lambda})$ is a two-dimensional self-similar set which is not contained in  any line.  Let $g(x,y)=\dfrac{x}{y}$, then
$$
(g_x)^2+(g_y)^2\neq 0, (g_{xx}g_y-g_{xy}g_x)^2+(g_{xy}g_y-g_{yy}g_x)^2\neq 0
$$
for any $(x,y)\in \Lambda$. Therefore, in terms of Theorem \ref{BBB},
$$\dim g(\Lambda)=\dim_{H}\dfrac{f_2(K_{\lambda})}{f_2(K_{\lambda})}=\min\{\dim_{H}(f_2(K_{\lambda})\times f_2(K_{\lambda})),1\}=\min\{2\dim_{H}(K_{\lambda}),1\}.$$
Hence, if  $0 < \lambda\leq 1/4$, then
$$\dim_{H}\dfrac{f_2(K_{\lambda})}{f_2(K_{\lambda})}=2\dim_{H}(K)=\dfrac{\log 4}{-\log \lambda}.$$
\end{proof}

\begin{proof}[Proof of Theorem \ref{Main}]
Theorem \ref{Main} (1) and (2) follows from Lemmas \ref{lem1}. Theorem \ref{Main} (3) follows from  \ref{lem2}, \ref{lem3}, \ref{lem4},  \ref{lem5} and \ref{dimm}.
\end{proof}

\section{Visible sets, slicing sets and open dynamical systems}
In this section, we give the proofs of Propositions \ref{pro1} and \ref{pro2}. 
Define 
$$P_{ij}=Proj_{\theta}(f_i([0,1])\cap f_j([0,1]))=\{y-x\tan\theta:(x,y)\in f_i([0,1])\cap f_j([0,1])\},1\leq i,j\leq 2.$$
It is easy to see that the length of $P_{ij}$ is $\lambda(1+\tan \theta).$   In what follows, we always assume that  $E=Proj_{\theta}(K_{\lambda}\times K_{\lambda})=[-\tan\theta, 1]$. Clearly, in terms of $P_{ij}$,  $E$ is the attractor of the following IFS,
\begin{eqnarray*}
               g_1(x)&=&\lambda x- (1-\lambda) \tan\theta\\
                g_2(x)&=&\lambda x+(1-\lambda)(1-  \tan\theta)\\
                g_3(x)&=&\lambda x\\
                g_4(x)&=&\lambda x+ 1-\lambda.
\end{eqnarray*}
In other words, $$E=\cup_{i=1}^{4}g_i(E)=[-\tan\theta,1].$$ 
For any  $x \in E$, there exists a sequence
$(i_n)_{n=1}^{\infty}\in\{1,\ldots,4\}^{\mathbb{N}}$ such that
\[x=\lim_{n\to \infty}g_{i_1}\circ \cdots\circ g_{i_n}(0).\] We call such a sequence a coding of $x$.
Usually, the coding of $x$ is not unique. If $x$ has a unique coding, then we call $x$ a univoque point. Write $U_1$ for all the univoque points of $E$ with respect to the IFS $\{g_i\}_{i=1}^{4}.$

The following result is proved in \cite[Lemma 2.1]{SKK} which states that any self-similar set can be regarded as a topological dynamical system. For the IFS of $E$, i.e. $\{g_i\}_{i=1}^{4}$, 
define $T_{j}(x):=g_{j}^{-1}(x)$ for $x\in g_j(E)$ and $1\leq j\leq 4$.  
We denote the concatenation $T_{i_{n}}\circ \ldots \circ T_{i_{1}}(x)$ by $T_{i_1\ldots i_n}(x)$. \begin{Lemma}
\label{Dynamical lemma}
Let $x\in K.$ Then $(i_{n})_{n=1}^{\infty}\in\{1,\ldots,m\}^{\mathbb{N}}$ is a coding for $x$ if and only if $T_{i_{1}\ldots i_{n}}(x)\in K$ for all $n\in\mathbb{N}.$
\end{Lemma}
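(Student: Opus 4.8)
The plan is to reduce both implications to the single elementary equivalence
$$
T_{i_{1}\ldots i_{n}}(x)\in K \iff x\in g_{i_{1}}\circ\cdots\circ g_{i_{n}}(K),
$$
and then to invoke the standard description of codings via nested cylinder sets. First I would record the trivial observation that, since each $g_{j}$ is an affine contraction and hence a bijection, one has $g_{j}^{-1}(y)\in K \iff y\in g_{j}(K)$ for every $y$. Unravelling the concatenation notation and using $(g_{i_{1}}\circ\cdots\circ g_{i_{n}})^{-1}=g_{i_{n}}^{-1}\circ\cdots\circ g_{i_{1}}^{-1}$, one sees that $T_{i_{1}\ldots i_{n}}=(g_{i_{1}}\circ\cdots\circ g_{i_{n}})^{-1}$, so the displayed equivalence follows by a short induction on $n$: the base case is the bijection remark, and the inductive step peels off one more factor $g_{i_{n+1}}$ by applying the same remark to the point $T_{i_{1}\ldots i_{n}}(x)$.

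Granting this, the hypothesis ``$T_{i_{1}\ldots i_{n}}(x)\in K$ for all $n$'' becomes ``$x\in g_{i_{1}}\circ\cdots\circ g_{i_{n}}(K)$ for all $n$'', i.e.\ $x$ lies in every rank-$n$ cylinder determined by $(i_{n})$. For the forward implication I would note that if $(i_{n})$ is a coding of $x$ then, since $K$ is the attractor, $x=\lim_{n}g_{i_{1}}\circ\cdots\circ g_{i_{n}}(0)\in g_{i_{1}}\circ\cdots\circ g_{i_{n}}(K)$ for every $n$, whence the equivalence gives $T_{i_{1}\ldots i_{n}}(x)\in K$. For the converse I would use that the cylinders $g_{i_{1}}\circ\cdots\circ g_{i_{n}}(K)$ are nested compact sets whose diameters tend to $0$ (here equal to $\lambda^{n}\,\mathrm{diam}(K)$, as all $g_{i}$ have ratio $\lambda$), so $\bigcap_{n}g_{i_{1}}\circ\cdots\circ g_{i_{n}}(K)$ is a single point, necessarily equal to $\lim_{n}g_{i_{1}}\circ\cdots\circ g_{i_{n}}(0)$; since $x$ belongs to this intersection, $x$ equals that limit and $(i_{n})$ codes $x$.

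The only genuinely delicate point is the bookkeeping of the domains of the maps $T_{j}$: each $T_{j}=g_{j}^{-1}$ is a priori an affine map on all of $\mathbb{R}$ but is meaningful only on $g_{j}(E)$, so I would state at the outset that ``$T_{i_{1}\ldots i_{n}}(x)\in K$'' is to be read as the requirement that successively applying these affine inverses keeps the orbit inside $K$ at each stage, which is exactly what the inductive equivalence encodes. Beyond this the argument is routine: the uniform contraction ratio $\lambda<1$ supplies the shrinking-diameter step that forces the nested intersection to be a singleton, and no separation or univoqueness hypothesis is required.
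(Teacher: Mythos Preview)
Your argument is correct. Note, however, that the paper does not supply its own proof of this lemma: it is simply quoted from \cite{SKK} (Lemma~2.1 there), so there is no in-paper proof against which to compare your approach. Your reduction to the nested-cylinder characterisation via the elementary equivalence $T_{i_{1}\ldots i_{n}}(x)\in K \iff x\in g_{i_{1}}\circ\cdots\circ g_{i_{n}}(K)$, together with the shrinking-diameter observation, is the standard route and requires no further justification.
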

Motivated by Lemma \ref{Dynamical lemma}, we may define the orbits of the points of $K$.
\setcounter{Definition}{1}
\begin{Definition}\label{Orbit set}
Let $x\in K$ with  a coding  $(i_n)_{n=1}^{\infty}$, we call  the set $$\{T_{i_{1}\ldots i_{n}}(x): n\geq 0\}$$ an  orbit set of  $x$, where $T_{i_0}(x)=x.$
\end{Definition}
It is easy to see that for different codings, the orbits of $x$ may be distinct.

The set $Proj_{\theta}(V_{\theta}(K_{\lambda}\times K_{\lambda}))$ can be viewed as a slicing set, i.e. 
$$Proj_{\theta}(V_{\theta}(K_{\lambda}\times K_{\lambda}))=\{a\in[-\tan\theta,1]: \sharp\{(x,(\tan\theta) x+a)\cap (K_{\lambda}\times K_{\lambda} ) \}=1\}, $$
where $\sharp(\cdot)$ denotes the cardinality. 
The following lemma is trivial. 
\begin{Lemma}
Suppose that $Proj_{\theta}(K_{\lambda}\times K_{\lambda})=[-\tan\theta, 1]$. Then 
$$Proj_{\theta}(V_{\theta}(K_{\lambda}\times K_{\lambda}))$$ is exactly the univoque set of $E$ under the IFS $\{g_i\}_{i=1}^{4},$ i.e. 
$$Proj_{\theta}(V_{\theta}(K_{\lambda}\times K_{\lambda}))=\{a\in[-\tan\theta,1]: \sharp\{(x,(\tan\theta) x+a)\cap (K_{\lambda}\times K_{\lambda} ) \}=1\}=U_1.$$
\end{Lemma}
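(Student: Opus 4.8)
The plan is to turn both sides of the claimed identity into statements about fibers of $Proj_\theta$ and then exhibit an explicit bijection between codings of a point $a\in E$ under $\{g_i\}_{i=1}^4$ and the points of $K_\lambda\times K_\lambda$ lying on the corresponding fiber. The equality with $U_1$ will then be immediate from the definition of univoque points.

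First I would dispose of the first equality in the statement by unwinding definitions. For $(x,y)\in K_\lambda\times K_\lambda$ the translated line $(x,y)+l_\theta$ is exactly the level set $\{(u,v):Proj_\theta(u,v)=Proj_\theta(x,y)\}$, so $(x,y)\in V_\theta(K_\lambda\times K_\lambda)$ if and only if this level set meets $K_\lambda\times K_\lambda$ only at $(x,y)$. Writing $a=Proj_\theta(x,y)$, this is precisely the condition $\sharp\{(x,(\tan\theta)x+a)\cap(K_\lambda\times K_\lambda)\}=1$, which yields $Proj_\theta(V_\theta(K_\lambda\times K_\lambda))=\{a\in[-\tan\theta,1]:\sharp\{\cdots\}=1\}$. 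So it remains to identify this set of $a$ with $U_1$.

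Next I would record the semiconjugacy linking the two systems. With $f_1(x)=\lambda x$, $f_2(x)=\lambda x+1-\lambda$ and $c_1=0$, $c_2=1-\lambda$, a one-line computation gives $Proj_\theta\big((f_i\times f_j)(x,y)\big)=\lambda\,Proj_\theta(x,y)+(c_j-c_i\tan\theta)$, and checking the four cases shows the translation parts are exactly $\{0,\,1-\lambda,\,-(1-\lambda)\tan\theta,\,(1-\lambda)(1-\tan\theta)\}$, so that $Proj_\theta\circ(f_i\times f_j)=g_{\kappa(i,j)}\circ Proj_\theta$ for a bijection $\kappa:\{1,2\}^2\to\{1,2,3,4\}$. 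Iterating this identity and using $Proj_\theta(0,0)=0$, I get that whenever $((i_n,j_n))$ is a coding of $(x,y)$ in the product IFS, the sequence $(\kappa(i_n,j_n))$ is a coding of $a=Proj_\theta(x,y)$ under $\{g_i\}$, and conversely each coding of $a$ comes, via $\kappa^{-1}$, from one product coding whose limit lies on the fiber over $a$.

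The key step, where I would use the hypothesis $0<\lambda<1/2$, is the bijectivity of the assignment \emph{codings of $a$} $\to$ \emph{fiber points over $a$}. Since $\lambda<1/2$, the images $f_1([0,1])=[0,\lambda]$ and $f_2([0,1])=[1-\lambda,1]$ are disjoint, so $\{f_1,f_2\}$ has the strong separation condition and every point of $K_\lambda$ has a unique coding; hence every point of $K_\lambda\times K_\lambda$ has a unique product coding. Combined with $\kappa$, this makes the above assignment a bijection: it is onto because each fiber point projects back through its unique product coding, and one-to-one because two codings of $a$ with the same image would give two product codings of one point, forcing them to agree. Consequently $\sharp(\text{fiber over }a)=1$ exactly when $a$ has a unique $\{g_i\}$-coding, i.e. $a\in U_1$; the assumption $Proj_\theta(K_\lambda\times K_\lambda)=E$ ensures every $a\in E$ has a nonempty fiber, so nothing is lost. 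The only genuine subtlety is precisely this bijectivity, which would fail for general overlapping Cantor factors — a fiber could be a singleton while $a$ still carries several $\{g_i\}$-codings — and here rests squarely on the strong separation of $K_\lambda$ forcing unique codings in each factor.
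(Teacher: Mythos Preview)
Your proof is correct. The paper itself offers no argument here --- it declares the result trivial and leaves it to the reader --- so you have supplied exactly what the authors omit. The semiconjugacy $Proj_\theta\circ(f_i\times f_j)=g_{\kappa(i,j)}\circ Proj_\theta$ is precisely what the paper's construction of the $g_k$ from the projected cells $P_{ij}$ is meant to encode, and your observation that strong separation in $K_\lambda$ forces unique product codings (hence a bijection between $\{g_i\}$-codings of $a$ and fiber points over $a$) is the mechanism the authors evidently have in mind when calling this trivial.
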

\begin{proof}
The proof is trivial. We leave it to  readers.
\end{proof}
If $\theta\in [\pi/4,\pi/2)$, let  $H_i=g_i(E)\cap g_{i+1}(E), 1\leq i\leq 3$, i.e. 
\begin{eqnarray*}
H_1&=&[1-\lambda-\tan\theta, \lambda-(1-\lambda)\tan\theta]\\
H_2&=&[-\lambda\tan\theta, 1-(1-\lambda)\tan\theta]\\
H_3 &=& [1-\lambda-\lambda \tan\theta,\lambda].
\end{eqnarray*}
Let   $H=H_1\cup H_2\cup H_3=[a_1,b_1]\cup [a_2,b_2]\cup[a_3,b_3]$. 
Similarly, if $\theta\in (0,\pi/4)$, then we can also define the $H_i=[a_i,b_i], 1\leq i\leq 3.$ 
Now the following result is a corollary of  the main result of \cite{SKK}. 
\begin{Proposition}
Suppose that $Proj_{\theta}(K_{\lambda}\times K_{\lambda})=[-\tan\theta, 1]$.  
For any $[a_i, b_i], 1\leq i\leq 3$, if there are some 
$i_1\cdots i_v, j_1j_2\cdots j_w$ such that 
$$T_{i_1\cdots i_v}(a_i)\in H, T_{j_1j_2\cdots j_w}(b_i)\in H,$$
then 
$$\{a\in[-\tan\theta,1]: \sharp\{(x,(\tan\theta) x+a)\cap (K_{\lambda}\times K_{\lambda} ) \}=1\}$$ 
is a graph-directed self-similar sets with the strong separation condition. 
\end{Proposition}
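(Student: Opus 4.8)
The plan is to identify $Proj_\theta(V_\theta(K_\lambda\times K_\lambda))$ with a survivor set of an open dynamical system and then to check that the hypotheses of the main theorem of \cite{SKK} hold. By the preceding lemma, this set equals the univoque set $U_1$ of $E$ for the IFS $\{g_i\}_{i=1}^4$, so it suffices to analyse $U_1$. The key reformulation, which follows from Lemma \ref{Dynamical lemma}, is that a point $x\in E$ fails to be univoque exactly when, at some stage of its coding, the current orbit point lies in two consecutive cylinders $g_i(E)$ and $g_{i+1}(E)$ simultaneously; since the cylinders are arranged in order and overlap only in the sets $H_i$, this happens precisely when the orbit of $x$ meets the overlap set $H=H_1\cup H_2\cup H_3$. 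Hence
$$U_1=\{x\in E:\ \{T_{i_1\ldots i_n}(x):n\ge 0\}\cap H=\emptyset\},$$
the set of points whose orbit avoids the ``hole'' $H$.

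Next I would build the finite Markov structure forced by the hypothesis. Writing $H=[a_1,b_1]\cup[a_2,b_2]\cup[a_3,b_3]$, the assumption supplies, for each endpoint, a finite word carrying it into $H$: $T_{i_1\cdots i_v}(a_i)\in H$ and $T_{j_1\cdots j_w}(b_i)\in H$. I would collect the finitely many orbit points of the six endpoints $a_i,b_i$, together with the endpoints of $E$ and of the cylinders $g_j(E)$, into a finite marked set, and cut $E$ along this set into finitely many closed subintervals. The return condition guarantees that each branch $T_j$ maps every such subinterval onto a union of these subintervals, so that the marked set is a Markov partition for the expanding dynamics adapted to the hole $H$; this is precisely the input required by \cite{SKK}.

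I would then quote the main result of \cite{SKK}: when the survivor set of a finite self-similar IFS with a hole admits a finite Markov partition generated by the eventually returning orbits of the endpoints of the hole, the survivor set is graph-directed self-similar. The vertices of the directed graph are the partition intervals, each edge records a branch $T_j$ mapping one vertex interval onto another, and the associated contraction is the corresponding composition among $\{g_j\}$, whose ratio is a power of $\lambda$. For the strong separation condition I would use that every hole $H_i$ is a nondegenerate interval of length $\lambda(1+\tan\theta)$, so that deleting $H$ together with all its preimages leaves genuinely disjoint images at each level of the construction; thus the graph-directed attractor $U_1$ satisfies the strong separation condition, completing the proof.

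The step I expect to be the main obstacle is the bookkeeping in the second paragraph: one must verify that the marked set generated by the endpoint orbits is actually finite and forward-closed, so that cutting along it yields a genuine Markov partition on which every branch sends intervals to unions of intervals. The delicate point is that once an endpoint orbit lands in $H$ it must be controlled by the already-marked points rather than spawning new ones; granting this, the transition graph and its contraction ratios are read off directly, and the identification with the framework of \cite{SKK} is routine.
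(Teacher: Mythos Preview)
Your proposal is correct and follows the same approach as the paper: identify the slicing set with the univoque set $U_1$ via the preceding lemma, then invoke the main result of \cite{SKK}. In fact the paper's own argument is just the single sentence ``the following result is a corollary of the main result of \cite{SKK}''; you have simply unpacked in more detail how the hypothesis on the endpoint orbits feeds into that theorem.
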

The following  is a corollary of  the main result of \cite{KarmaKan}. 
\begin{Proposition}
$Proj_{\theta}(K_{\lambda}\times K_{\lambda})=[-\tan\theta, 1]$.   For any $[a_i, b_i], 1\leq i\leq 3$, if all the possible orbits of $a_i$ and $b_i$ hit finitely many points, then 
apart from a countable set 
$$\{a\in[-\tan\theta,1]: \sharp\{(x,(\tan\theta) x+a)\cap (K_{\lambda}\times K_{\lambda} ) \}=1\}$$ 
is a graph-directed self-similar sets with the open set condition. 
\end{Proposition}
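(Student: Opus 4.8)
The plan is to realize the slicing set as the survivor set of an open dynamical system and then quote the main theorem of \cite{KarmaKan}. First I would apply the preceding lemma to replace $Proj_{\theta}(V_{\theta}(K_{\lambda}\times K_{\lambda}))$ by the univoque set $U_1$ of $E$ with respect to $\{g_i\}_{i=1}^4$. Since the four images $g_1(E),\ldots,g_4(E)$ are arranged from left to right and only consecutive ones meet, their pairwise intersections are exactly the three intervals $H_1,H_2,H_3$, and $H=H_1\cup H_2\cup H_3$ collects all the overlaps. By Lemma \ref{Dynamical lemma}, the first symbol of a coding of $x$ is forced precisely when $x$ lies in a single image $g_i(E)$, whereas $x\in H$ permits two choices and hence branches the coding. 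Iterating, I would show that $x\in U_1$ if and only if its orbit $\{T_{i_1\ldots i_n}(x)\}_{n\geq 0}$ never enters $H$; in other words, $U_1$ is exactly the set of points that survive under the expanding branches $T_j$ with $H$ playing the role of the hole. This identification is uniform in the two ranges $\theta\in(0,\pi/4)$ and $\theta\in[\pi/4,\pi/2)$, once the intervals $[a_i,b_i]$ have been written down.

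Next I would extract the finite Markov structure forced by the hypothesis. Writing $H=[a_1,b_1]\cup[a_2,b_2]\cup[a_3,b_3]$, the assumption that all possible orbits of the six endpoints $a_i,b_i$ visit only finitely many points means that the collection $P$ of all such orbit points is finite. I would use $P$, together with the two endpoints of $E$, as the breakpoints of a finite partition of $E$ into closed subintervals. The finiteness and forward invariance of $P$ under the admissible branches guarantees that each partition interval lying outside $H$ is carried by every admissible $T_j$ onto a union of partition intervals; this is the Markov property that lets one encode the survivor set $U_1$ by a finite directed graph, whose vertices are the surviving partition intervals and whose edges record the admissible applications of the maps $g_i$.

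With this finite graph in place, the main result of \cite{KarmaKan} applies: it identifies such a survivor set, after the removal of a countable set, with the attractor of the associated graph-directed iterated function system satisfying the open set condition. I would verify that the hypotheses of that theorem reduce exactly to the finiteness of $P$ assumed here. The countable exceptional set is the set of points whose orbit eventually lands on an endpoint of some $H_i$; since the dynamics is finitely branching and these target points form a finite set, their full preimage is a countable union of finite sets, hence countable. These boundary-hitting orbits are also the reason one obtains only the open set condition rather than the strong separation condition of Proposition \ref{pro1}: adjacent cylinders of the graph-directed system may share an endpoint but have disjoint interiors. Transporting the conclusion back through the equality $Proj_{\theta}(V_{\theta}(K_{\lambda}\times K_{\lambda}))=U_1$ gives the proposition.

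The step I expect to be the main obstacle is not a single estimate but the verification that the finite-orbit hypothesis genuinely produces a finite Markov partition: one must check that $P$ is forward invariant under all admissible branches, so that every surviving interval maps onto a union of intervals and the directed graph is truly finite, and one must control the shared endpoints so that the resulting overlaps are negligible and only the open set condition survives. Managing this bookkeeping, and pinning down precisely which countable set must be discarded, is where the real work lies; once it is done, the statement follows by direct appeal to \cite{KarmaKan}.
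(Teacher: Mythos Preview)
Your proposal is correct and follows exactly the route the paper takes: the paper simply declares the proposition to be a corollary of the main result of \cite{KarmaKan}, and what you have written is a faithful unpacking of why that citation applies---identifying the slicing set with the univoque set $U_1$ via the preceding lemma, recognizing $U_1$ as the survivor set of the open dynamical system with hole $H$, and observing that the finite-orbit hypothesis is precisely the input needed for \cite{KarmaKan}. Your discussion of the countable exceptional set and the reason one gets only the open set condition is accurate and in fact more explicit than anything the paper provides.
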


\section{Some remarks}
The main idea of this paper is to establish a connection  between  the visible problem and  arithmetic on the fractal sets.
Our idea can be implemented for other overlapping self-similar sets.
Similar results can be obtained if we replace the line $y=\alpha x$ in the definition of $V$,  by some
  parabolic curves or hyperbolic curves.  Nevertheless, for these cases, the analysis can be difficult. We shall discuss these problems in another paper.

  \section*{Acknowledgements}
The work is supported by National Natural Science Foundation of China (Nos.11701302,

11671147). The work is also supported by K.C. Wong Magna Fund in Ningbo University.


\end{document}